\title[BV Calabi--Yau threefolds]{Borcea--Voisin Calabi--Yau threefolds and invertible potentials}
\author{Michela Artebani}
\address{Departamento de Matem\'atica, Universidad de Concepci\'on, Casilla 160-C, Concepci\'on, Chile}
\email{martebani@udec.cl}
\author{Samuel Boissi\`ere}
\address{Laboratoire de Math\'ematiques et Applications,
			Universit\'e de Poitiers, T\'el\'eport 2, Boulevard Marie et Pierre Curie,
			F-86962 Futuroscope Chasseneuil}
\email{Samuel.Boissiere@math.univ-poitiers.fr}
\urladdr{http://www-math.sp2mi.univ-poitiers.fr/$\sim$sboissie/}
\author{Alessandra Sarti}
\address{Laboratoire de Math\'ematiques et Applications, 
			Universit\'e de Poitiers, T\'el\'eport 2, Boulevard Marie et Pierre Curie,
			F-86962 Futuroscope Chasseneuil}
\email{sarti@math.univ-poitiers.fr}
\urladdr{http://www-math.sp2mi.univ-poitiers.fr/$\sim$sarti/}
\date{\today}
\dedicatory{with an appendix in collaboration with Sara A. Filippini}
\newtheorem{theorem}{Theorem}[section]
\newtheorem{prop}[theorem]{Proposition}
\newtheorem{lemma}[theorem]{Lemma}
\theoremstyle{definition}
\newtheorem{remark}[theorem]{Remark}
\newtheorem{example}[theorem]{Example}
\DeclareMathOperator{\Pic}{Pic}
\DeclareMathOperator{\SL}{SL}
\DeclareMathOperator{\diag}{diag}
\DeclareMathOperator{\id}{id}
\DeclareMathOperator{\rk}{rk}
\DeclareMathOperator{\Aut}{Aut}
\newcommand{\IC}{\mathbb{C}}
\newcommand{\IQ}{\mathbb{Q}}
\newcommand{\IZ}{\mathbb{Z}}
\newcommand{\IP}{\mathbb{P}}
\newcommand{\cK}{\mathcal{K}}
\newcommand{\cZ}{\mathcal{Z}}
\newcommand{\lra}{\longrightarrow}
\newcommand{\coloneqq}{:=}
\newcommand{\sbt}{\,\begin{picture}(-1,1)(-1,-3)\circle*{2}\end{picture}\ }
\newcommand{\ie}{{\it{i.e. }}}
\newcommand{\ii}{{\rm i}}
\newcommand{\orb}{{\rm orb}}
\keywords{Calabi--Yau varieties, mirror symmetry, non-symplectic automorphisms, K3 surfaces}
\thanks{The authors are partially supported by proyecto FONDECYT Regular N. 1130572, GDRE-GRIFGA and GDR TLAG}
\begin{document}

\begin{abstract} 
We prove that the Borcea--Voisin mirror pairs of Calabi--Yau threefolds
admit projective birational models that satisfy the Berglund--H\"ubsch--Chiodo--Ruan  
transposition rule. This shows that the two mirror constructions 
provide the same mirror pairs, as soon as both can be defined.
\end{abstract}

\maketitle
 
\section{Introduction}

A famous family of Calabi--Yau threefolds $Z_{E,S}$ has been defined by Borcea~\cite{borcea} and Voisin~\cite{voisin}
from the data of an elliptic curve $E$ and a K3 surface $S$ with a non-symplectic involution $\sigma_S$. 
The varieties $Z_{E,S}$ are defined as the crepant resolutions of the quotients $\frac{E\times S}{\langle \sigma_E\times \sigma_S\rangle}$, where  $\sigma_E=-\id_E$.
The geometry of $Z_{E,S}$ is described by a triple $(r,a,\delta)$ which is obtained from the lattice-theoretical invariants
of the non-symplectic involution on $S$. 
When the surface $S$ varies in the moduli space of K3 surfaces with a non-symplectic involution, 
one obtains moduli spaces $\cZ_{r,a,\delta}$ of Calabi--Yau threefolds indexed by these triples. 
The mirror phenomenon proved by Borcea~\cite{borcea} and Voisin~\cite{voisin}
consists in the observation that the moduli spaces $\cZ_{r,a,\delta}$  and $\cZ_{20-r,a,\delta}$  
form a mirror pair, in the sense that generic elements $Z\in\cZ_{r,a,\delta}$  and $Z'\in\cZ_{20-r,a,\delta}$
satisfy important properties expected for mirror Calabi--Yau varieties, in particular their Hodge diamonds 
are related by a rotation. 
The Borcea--Voisin (BV for short) mirror of $Z_{E,S}$ is 
constructed by taking the lattice mirror of the K3 surface $S$ (see Sections~\ref{ss:lattice_mirror} \& \ref{s:BV}).

In a completely different setup, Chiodo--Ruan~\cite{cr} proved a mirror theorem for Calabi--Yau varieties that are
crepant resolutions of  certain hypersurfaces in weighted projective spaces. The construction is very explicit (see Section~\ref{ss:BHCR}): the mirror of a hypersurface
is obtained by using a generalization of the Berglund--H\"ubsch transposition rule~\cite{bh}  and results of Krawitz~\cite{krawitz}. 
In the sequel, this construction is called BHCR mirror.

The goal
of this paper is to explain how these two constructions of mirror pairs are related to each other. 
The question is a very natural one:
assuming that we have a birational projective model of  $Z_{E,S}$, given as a Calabi--Yau weighted 
hypersurface $W_{E,S}$, is it true that the BHCR mirror of $W_{E,S}$ is birational to the BV mirror of $Z_{E,S}$?

The present paper
is the final step of a project that started with our previous paper~\cite{ABS}, where we proved that lattice mirrors
of K3 surfaces with a non-symplectic involution can be obtained by the BHCR construction (see Theorem~\ref{th:mirrorK3}).
We give a positive answer to the above question in Theorem~\ref{th:main}.
This requires first to produce birational projective models of the varieties $Z_{E,S}$ to which the BHCR construction can be applied
(see Section~\ref{ss:birproj}). This follows an original idea of Borcea but we have to be more precise on the choices
 of the equations: here we make use of the results in \cite{ABS}, where we provided Delsarte type equations for K3 surfaces
with a non-symplectic involution.
Secondly, we have to control the behavior of the transposed groups that occur in the BHCR construction under
these birational equivalences, this is the main technical point (see Section~\ref{ss:behavior}). Finally, we answer
a more general  version of the initial question by considering quotients (see Section~\ref{ss:main}). The Appendix in collaboration
with Sara A. Filippini describes BHCR mirror pairs for elliptic curves.

\section{Preliminary results}

\subsection{Lattice mirror symmetry for K3 surfaces}\label{ss:lattice_mirror}

Non-symplectic involutions on K3 surfaces 
have been classified by Nikulin \cite{nikulinfactor}, we briefly recall the main results.
Let $S$ be a complex projective K3 surface with a non-symplectic involution $\sigma$.
We denote by $\sigma^\ast$ its action on the cohomology lattice $H^2(S,\IZ)$ and
we consider the invariant sublattice
\[
L(\sigma)\coloneqq\{x\in H^2(S,\mathbb Z)\,|\, \sigma^*(x)=x\}.
\]
The lattice $L(\sigma)$ is $2$-elementary, \ie 
its discriminant group $L(\sigma)^\vee/L(\sigma)$ 
is isomorphic to $\left(\IZ/2\IZ\right)^{\oplus a}$ for some non-negative integer $a$
and $L(\sigma)$ is uniquely determined up to isometry by the triple $(r,a,\delta)$,
where $r$ is the rank of $L(\sigma)$ and $\delta\in \{0,1\}$ is 
zero if and only if $x^2\in \IZ$ for any $x\in L(\sigma)^\vee$.
In the sequel we denote this lattice by~$L(r,a,\delta)$.
If $(r,a,\delta)\notin\{(10,8,0), (10,10,0)\}$
then the fixed locus of $\sigma$ in $S$ is a union of smooth disjoint curves
$C_g\cup E_1\cup\cdots \cup E_k$,
where $C_g$ is a curve of genus $g\geq 0$,
the curves $E_i$ are rational and we have (see \cite[Theorem~4.2.2]{nikulinfactor}):
\[
2g=22-r-a,\quad 2k=r-a.
\] 
We now recall the construction of a mirror symmetry for
moduli spaces of K3 surfaces with a non-symplectic involution,
following works of Dolgachev and Nikulin~\cite{dn,nikulinmir,dolgachevmirror}, Voisin~\cite{voisin}
and Borcea~\cite{borcea}.
Let $L_{K3}\coloneqq U^3\oplus E_8^2$ be the K3 lattice.
Consider a  primitive sublattice $M$ of $L_{K3}$ of signature 
$(1,r-1)$ with $1\leq r\leq 19$.
An {\em $M$-polarized K3 surface} is a pair $(S,j)$,
where $S$ is a K3 surface and $j\colon M\hookrightarrow \Pic(S)$ 
is a primitive lattice embedding.
By taking periods one can define a coarse moduli space $\mathcal K_M$ for $M$-polarized K3 surfaces
(see \cite{dolgachevmirror}) which is an irreducible quasi-projective variety 
of dimension $20-r$.
By construction the Picard lattice of a 
generic K3 surface in $\mathcal K_M$ is isomorphic to $M$.
Assume that the orthogonal complement $M^{\perp}$ of $M$ in $L_{K3}$ 
has a decomposition
\begin{equation}\label{u}
M^{\perp}\cong U\oplus M'.
\end{equation}
Identifying $M'$ with its image via this isomorphism, 
we obtain a primitive embedding of $M'$ in the K3 lattice, 
so that we can consider the moduli space $\mathcal K_{M'}$ as before. 
The following conditions hold for $S$ generic in $\mathcal K_M$ and $S'$  generic in $\mathcal K_{M'}$:
\[
\dim (\mathcal K_{M'})=\rk \Pic(S)=r,\quad  \dim (\mathcal K_{M})=\rk \Pic(S')=20-r.
\] 
By exchanging the roles of $M$ and $M'$ we get a duality between moduli spaces 
of lattice polarized K3 surfaces, which will be called \emph{lattice mirror symmetry}, 
where the dimension of the moduli space 
 and the rank of the Picard group are exchanged  
(more evidence is given in \cite{dolgachevmirror}).
In the special case where $M=L(r,a,\delta)$ we have the following result:

\begin{theorem}\cite[Theorem~2.5,\S 2.3]{voisin}\label{th:mirrorK3} If $(r,a,\delta)\neq (14,6,0)$
and $r+a\leq 20$ then condition {\rm (\ref{u})} holds for the lattice $L(r,a,\delta)$
and we have $L(r,a,\delta)'\cong L(20-r,a,\delta)$. Moreover, the generic K3 surface in $\cK_{L(r,a,\delta)}$
admits a non-symplectic involution whose invariant lattice is isometric to $L(r,a,\delta)$.
\end{theorem}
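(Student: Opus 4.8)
The plan is to reduce the statement to Nikulin's theory of $2$-elementary lattices together with the Torelli theorem; at heart it is a bookkeeping of discriminant invariants. Write $M\coloneqq L(r,a,\delta)$, which by Nikulin's classification embeds primitively in the unimodular K3 lattice $L_{K3}=U^3\oplus E_8^2$ of signature $(3,19)$, and let $N\coloneqq M^\perp$ be its orthogonal complement. First I would record the invariants of $N$. Since $M$ has signature $(1,r-1)$, the lattice $N$ has signature $(2,20-r)$; and since $M$ is primitive in a unimodular lattice, the discriminant form of $N$ is $q_N\cong -q_M$ on the group $A_N=N^\vee/N\cong A_M$. As $M$ is $2$-elementary with invariants $(a,\delta)$, and negation preserves both the length and the $\delta$-invariant of a $2$-elementary form (if $x^2\in\IZ$ then $-x^2\in\IZ$), the lattice $N$ is again $2$-elementary with invariants $(a,\delta)$.

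The core step is the splitting $N\cong U\oplus M'$. Here I would invoke Nikulin's criterion for an even indefinite lattice to split off a hyperbolic plane: $N$ has both signs positive ($2,20-r\geq 1$, using $r\leq 19$) and $\rk N=22-r\geq a+2=\ell(A_N)+2$, the last inequality being exactly the hypothesis $r+a\leq 20$. Excluding the boundary value $(14,6,0)$—where $\rk N=a+2$ with $\delta=0$ and the decomposition genuinely fails (one splits off $U(2)$ rather than $U$)—Nikulin's theorem yields an orthogonal decomposition $N\cong U\oplus M'$, which is precisely condition (\ref{u}). This is the step I expect to be the main obstacle, both because it rests on the finer part of Nikulin's existence results and because one must isolate the single exceptional triple at the boundary.

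It then remains to identify $M'$. Subtracting the hyperbolic plane, $M'$ has signature $(1,19-r)$, i.e.\ rank $20-r$; and as $U$ is unimodular, $q_{M'}\cong q_N\cong -q_M$, so $M'$ is even $2$-elementary with invariants $(a,\delta)$. By the uniqueness of indefinite even $2$-elementary lattices—determined up to isometry by signature together with $(a,\delta)$—we obtain $M'\cong L(20-r,a,\delta)$, the unique such lattice of signature $(1,19-r)$; here one checks the compatibility of the signatures modulo $8$ of the two discriminant forms, both being congruent to $r-2$. This establishes $L(r,a,\delta)'\cong L(20-r,a,\delta)$.

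For the last assertion I would construct the involution at the level of $L_{K3}$. The isometry $\rho\coloneqq\id_M\oplus(-\id_N)$ of $M\oplus N$ extends to an isometry of $L_{K3}$: since $N$ is $2$-elementary, $-\id$ acts as the identity on $A_N$, so $\rho$ is compatible with the gluing isomorphism $A_M\cong A_N$ defining the overlattice. By the surjectivity of the period map and the Torelli theorem, the generic $M$-polarized K3 surface $S$ (for which $\Pic S\cong M$) carries an involution $\sigma$ with $\sigma^\ast=\rho$; as $\rho$ acts by $-1$ on the positive-definite plane containing the period, $\sigma$ is non-symplectic, and its invariant lattice is $L(\sigma)=M=L(r,a,\delta)$, as claimed.
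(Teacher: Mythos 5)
Your overall architecture is sound, and it is essentially the standard one: the paper itself gives no proof of this statement (it is quoted from Voisin), and Voisin's argument likewise runs through Nikulin's theory of $2$-elementary lattices. Your computation of the invariants of $N=M^\perp$ and your final Torelli-plus-$\id_M\oplus(-\id_N)$ step are correct. The genuine gap is in the central splitting step, exactly where you predicted the difficulty. The criterion you invoke (``even, both signs positive, $\rk N\geq \ell(A_N)+2$ implies $N\cong U\oplus M'$'') is false as stated, and your own exceptional case disproves it: for $(r,a,\delta)=(14,6,0)$ the complement $N\cong U(2)^{\oplus 2}\oplus D_4$ is even, indefinite and satisfies $\rk N=8=\ell(A_N)+2$, yet does not split off $U$. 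Nikulin's actual splitting criterion requires $\rk N\geq \ell(A_N)+3$, and with that hypothesis your argument only covers $r+a\leq 19$. Consequently every boundary triple with $r+a=20$ other than $(14,6,0)$ --- for instance $(10,10,0)$, $(16,4,1)$, $(18,2,0)$, $(19,1,1)$ --- is left unproved, even though the theorem asserts the splitting for all of them. The sentence ``excluding the boundary value $(14,6,0)$, Nikulin's theorem yields the decomposition'' is not an argument: the criterion you stated sees only rank, length and indefiniteness, so it cannot distinguish $(14,6,0)$ from the other boundary triples, which satisfy the identical hypotheses.

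The repair is to drop the general splitting criterion and use Nikulin's classification of even \emph{indefinite} $2$-elementary lattices: such a lattice is determined up to isometry by its signature together with $(a,\delta)$, and it exists if and only if explicit congruence conditions hold, among them the boundary condition ``if $\delta=0$ and $a=\rk$, then the signature is $\equiv 0 \bmod 8$''. One first checks that the hyperbolic $2$-elementary lattice $L(20-r,a,\delta)$ exists precisely when $r+a\leq 20$ and $(r,a,\delta)\neq(14,6,0)$: the quoted boundary condition is what kills $(6,6,0)$, since its signature is $2-6\equiv 4\bmod 8$, while it passes for $(10,10,0)$, $(2,2,0)$, etc. Then $N$ and $U\oplus L(20-r,a,\delta)$ are both even indefinite $2$-elementary lattices of signature $(2,20-r)$ with the same invariants $(a,\delta)$, hence isometric by uniqueness; this gives condition (\ref{u}) and the identification $M'\cong L(20-r,a,\delta)$ in one stroke. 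This route also fixes a minor edge case in your third paragraph: for $r=19$ the lattice $M'$ has rank one and is definite, so ``uniqueness of indefinite $2$-elementary lattices'' does not literally apply to $M'$; in the argument above uniqueness is applied to $N$, which is always indefinite since $r\leq 19$.
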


\subsection{The Berglund--H\"ubsch--Chiodo--Ruan  mirror orbifolds}\label{ss:BHCR}

We recall a mirror construction of Chiodo--Ruan~\cite{cr} based on the Berglund--H\"ubsch
transposition rule~\cite{bh}  for hypersurfaces in weighted projective spaces and important results of Krawitz~\cite{krawitz}.

Let $W$ be a polynomial of Delsarte type, 
\ie having $n$ monomials in $n$ variables. 
Up to rescaling its variables we can assume 
that the coefficients of $W$ are all equal to one, 
so that $W$ is characterized by its matrix of exponents $A=(a_{ij})$:
\[
W(x_1,\dots,x_n)=\sum_{i=1}^n \prod_{j=1}^n x_j^{a_{ij}}.
\]
Denoting by ${\bf 1}$ the column vector with all entries equal to one 
we define the \emph{weights sequence} $w\coloneqq (w_1,\dots,w_n)$ as the smallest integer positive multiple 
of the \emph{charges sequence} $q\coloneqq A^{-1}{\bf 1}$.
Putting $w=dq$, $W$ is a $w$-weighted homogeneous polynomial of degree $d$. 
We make the following assumptions:
the matrix $A$ is invertible over $\IQ$,
$w$ is {\em normalized} (\ie
$\gcd(w_1,\dots,\widehat{w_i}, \dots, w_n)=1$ for all $i$),
$W$ is {\em non-degenerate} 
(\ie the affine cone defined by $W$ in $\IC^{n}$ is smooth outside the origin),  
and the {\em Calabi--Yau condition} $\sum\limits_{i=1}^n w_i=d$ is satisfied.
Under these conditions the zero set $X_W$ of $W$ defines
a (singular) Calabi--Yau variety in the weighted projective space
$\IP(w)\coloneqq\IP(w_1,\dots,w_n)$ 
(see for example~\cite[Lemma 1.11]{cortigo} and \cite[\S 2]{ABS}).

Let $\Aut(W)$ be the group of linear automorphisms 
of $\IC^n$ which are diagonal in the coordinates $(x_i)_{i=1,\ldots,n}$ and  which leave $W$ invariant.
Let $\SL(W)$ be the subgroup of $\Aut(W)$ containing the automorphisms of determinant one and
$J_W$ be the subgroup of $\SL(W)$ which acts trivially on $\IP(w)$. 
The group $J_W$ is cyclic,  generated 
by  $j_W\coloneqq\diag\left((\exp(2\ii\pi q_j))_{j=1,\dots,n}\right)$,
where $q=(q_1,\ldots,q_n)$. 
For any subgroup $G\subset \SL(W)$ we put $\widetilde{G}\coloneqq G/J_W$.
Since the action of $\widetilde{G}$ is trivial on the volume form
(see \cite[Proposition~2.3]{ABS}), the quotient $X_W/\widetilde{G}$ is a Calabi--Yau orbifold.
The group $\Aut(W)$ 
can be identified with the group
 $A^{-1}\IZ^n/\IZ^n$ {\it via} the 
map
\[
v=(v_1,\ldots,v_n)\in A^{-1}\IZ^n/\IZ^n\mapsto \diag\left((\exp(2\ii\pi v_j))_{j=1,\ldots,n}\right)\in\Aut(W).
\]
Under this identification $q$ corresponds to $j_W$ 
and $v\in \SL(W)$ if and only if ${\sum\limits_{i=1}^n v_i\in\IZ}$.
We thus identify the group $\widetilde{\SL}(W)$ with the quotient 
\[
\left.\left\{v\in A^{-1}\IZ^n/\IZ^n\,\Big|\, \sum\limits_{i=1}^n v_i\in \IZ\right\}\right/\langle q\rangle. 
\]

Let $W^{\top}$ be the polynomial 
whose matrix of exponents is the transposed matrix $A^{\top}$.
It defines in a similar manner a Calabi--Yau variety $X_{W^{\top}}$ 
in the weighted projective space $\IP(w)^{\top}:=\IP(w_1^{\top},\dots,w_n^{\top})$
where $w^{\top}$ is the smallest positive multiple 
of ${q^{\top}\coloneqq(A^{\top})^{-1}\bf 1}$ with integer entries. 
There is a well-defined bilinear form
\[
\widetilde{\SL}(W^\top)\times \widetilde{\SL}(W)\to \IQ/\IZ,\quad 
(u,v)\mapsto u^\top Av.
\]
Given a subgroup $\widetilde{G}\subset\widetilde{\SL}(W)$,
its {\em transposed group} $\widetilde{G}^\top$ is defined as the orthogonal complement of $\widetilde{G}$
with respect to this bilinear form:
\[
\widetilde{G}^\top:=\{u\in \widetilde{\SL}(W^\top)\,|\, u^\top Av=0\quad \forall v\in \widetilde{G}\}.
\]
The original definition of the transposed group is due to Krawitz~\cite{krawitz} and has been reinterpreted by
many authors, we refer to~\cite{ABS} for a discussion of equivalent definitions.
By \cite[Corollary 3.7]{ABS} we have
\[
|\SL(W)|=\frac{|\det(A)|}{\deg(W^\top)},\quad |\widetilde{\SL}(W)|=\frac{|\det(A)|}{\deg(W)\deg(W^\top)}.
\]

The pair $\left(X_W/\widetilde{G}, X_{W^\top}/\widetilde{G}^\top\right)$ is
called a BHCR mirror pair in virtue of the following result.

\begin{theorem}\cite[Theorem~2]{cr}\label{th:bhcr} The Calabi--Yau orbifolds $[X_W/\widetilde{G}]$ and $[X_{W^\top}/\widetilde{G}^\top]$ satisfy
$$
H_\orb^{p,q}\left([X_W/\widetilde{G}],\IC\right)\cong H_\orb^{n-2-p,q}\left([X_{W^\top}/\widetilde{G}^\top],\IC\right)\quad \forall p,q,
$$
where $H_\orb(-,\IC)$ stands for the orbifold cohomology of Chen and Ruan.
\end{theorem}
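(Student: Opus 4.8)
The plan is to compute both sides through the Landau--Ginzburg model attached to the invertible potential and to reduce the statement to a purely combinatorial duality at the level of state spaces. To a pair $(W,G)$, with $G\subseteq\SL(W)$ containing $J_W$, one attaches a bigraded vector space, the A-model state space
\[
\mathcal{H}_{W,G}=\bigoplus_{g\in G}\left(\mathrm{Jac}(W_g)\,\omega_g\right)^{G},
\]
where, writing $g=\diag\left((\exp(2\ii\pi v_j))_j\right)$, the potential $W_g$ is the restriction of $W$ to the fixed locus $\Fix(g)=\{x_i=0 : v_i\notin\IZ\}$, the ring $\mathrm{Jac}(W_g)$ is its Milnor ring, $\omega_g$ is the standard volume form on $\Fix(g)$, and the invariants are taken for the induced $G$-action. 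First I would invoke the cohomological Landau--Ginzburg/Calabi--Yau correspondence of Chiodo--Ruan to identify $H_\orb^{p,q}\left([X_W/\widetilde{G}],\IC\right)$ with the bidegree-$(p,q)$ part of $\mathcal{H}_{W,G}$, up to the age and dimension shifts dictated by the correspondence; the same applied to $(W^\top,G^\top)$, where $G^\top\subseteq\SL(W^\top)$ is the lift of $\widetilde{G}^\top$, identifies the right-hand side with $\mathcal{H}_{W^\top,G^\top}$. This recasts the theorem as an isomorphism of bigraded state spaces that reflects the first Hodge index.

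The main step is then the Berglund--H\"ubsch--Krawitz duality. The transposition $A\mapsto A^\top$, together with the pairing $(u,v)\mapsto u^\top A v$ used above to define $\widetilde{G}^\top$, produces an explicit isomorphism $\mathcal{H}_{W,G}\xrightarrow{\sim}\mathcal{H}_{W^\top,G^\top}$: a monomial basis element of $\mathrm{Jac}(W_g)$ in the $g$-twisted sector is sent to a distinguished generator in a uniquely determined $g^\top$-twisted sector on the transposed side, the element $g^\top$ being read off from the exponents of the monomial through $A^\top$. I would check that this assignment descends to the $G$- and $G^\top$-invariants precisely because the pairing $u^\top A v$ is exactly the obstruction to invariance, so that the orthogonal complement $\widetilde{G}^\top$ selects on the transposed side exactly the sectors that survive. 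This is the construction of Krawitz, and it identifies the two state spaces as bigraded vector spaces once the gradings are matched.

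The heart of the argument, and the step I expect to be the main obstacle, is the bookkeeping of the two gradings through this correspondence so that the final reflection reads $(p,q)\mapsto(n-2-p,q)$. One must track how the shift $\age(g)$ entering the orbifold bidegree on the $W$-side is converted into the complementary shift attached to $g^\top$ on the $W^\top$-side, using the relation between $\age(g)$, $\age(g^{-1})$ and the degrees $\deg(W),\deg(W^\top)$, together with the cardinality formula $|\widetilde{\SL}(W)|=|\det(A)|/(\deg(W)\deg(W^\top))$ recalled above. The delicate point is that the restricted potentials $W_g$ and $(W^\top)_{g^\top}$ are again invertible, so the identification of their Milnor rings must be carried out compatibly with the inclusions of the respective fixed loci, and this has to hold simultaneously in every twisted sector rather than only in the untwisted one. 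Once the bidegree swap is verified on the nose for the full groups $G$ and $G^\top$, the statement for the orbifold Hodge groups follows by combining the two Landau--Ginzburg/Calabi--Yau identifications with the Krawitz isomorphism.
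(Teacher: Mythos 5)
This theorem is not proved in the paper at all: it is quoted verbatim from Chiodo--Ruan \cite{cr} (their Theorem~2), so there is no internal argument to compare against. Your outline --- applying the cohomological LG/CY state-space isomorphism to both $(W,G)$ and $(W^\top,G^\top)$, then invoking Krawitz's bigraded duality $\mathcal{H}_{W,G}\cong\mathcal{H}_{W^\top,G^\top}$ \cite{krawitz} and tracking the age/degree shifts to obtain the reflection $(p,q)\mapsto(n-2-p,q)$ --- is exactly the strategy by which the result is established in the cited source, so it is essentially the same approach as the original proof.
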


The theorem is trivial when $n=3$ (elliptic curves) or $n=4$ (K3 surfaces).
We recall some results concerning the properties of the transposition rule in these two situations.
Assume that the polynomial $W$ takes the form: 
 \[
 W=x_0^2+f(x_1,x_2).
 \]
We denote by $E:=X_W$ the elliptic curve defined by $W$ in $\IP(w_0, w_1, w_2)$, 
it admits an involution induced by $x_0\mapsto -x_0$. By results of the Appendix we have:

 \begin{lemma}\label{ell}
Under the above assumptions, we have $E/\widetilde{\SL}(W)\cong E$.
 \end{lemma}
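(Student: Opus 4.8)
\section*{Proof proposal}

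The plan is as follows. Since $n=3$, the hypersurface $E=X_W$ is a smooth genus-one curve in $\IP(w_0,w_1,w_2)$, i.e. an elliptic curve, and $G\coloneqq\widetilde{\SL}(W)$ is a finite group acting on it by restriction of diagonal automorphisms of the ambient weighted projective space. The starting observation is that $G$ acts trivially on the holomorphic volume form of $E$, which here is the holomorphic $1$-form $\omega_E$; this is exactly the Calabi--Yau property recalled above (\cite[Proposition~2.3]{ABS}). Now any automorphism of an elliptic curve has the form $z\mapsto \alpha z+\beta$ in a uniformisation $E\cong\IC/\Lambda$, and it multiplies $\omega_E=dz$ by $\alpha$; hence an automorphism fixing $\omega_E$ has $\alpha=1$ and is a \emph{translation}. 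Therefore every element of $G$ is a translation, $G$ is identified with a finite subgroup $T\subset E$, and the quotient map $E\to E/G$ is an isogeny of degree $|G|$. In particular $E/G$ is again an elliptic curve; what remains is to show that it is isomorphic to $E$, and not merely isogenous to it.

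First I would compute the order of $G$ using the formula $|\widetilde{\SL}(W)|=|\det(A)|/(\deg(W)\deg(W^\top))$ recalled above from \cite{ABS}. The hypothesis $W=x_0^2+f(x_1,x_2)$ forces $f$ to be a non-degenerate invertible potential in two variables, so $f$ is of Fermat, chain or loop type, and the Calabi--Yau relation together with $2w_0=d$ pins down finitely many admissible exponent matrices $A$. A direct check of this short list shows that the order formula gives $|G|=1$ in every case except the Fermat quartic $W=x_0^2+x_1^4+x_2^4$ on $\IP(2,1,1)$, where $|G|=2$. When $G$ is trivial the statement is immediate, so the entire content of the lemma is concentrated in this one case.

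For that case I would make the translation explicit. A generator of $G$ is $g=\diag(1,\ii,-\ii)$, and a short computation shows that its only fixed points in $\IP(2,1,1)$ are the three coordinate points, none of which lies on $E$; thus $g$ acts on $E$ as a fixed-point-free involution, i.e. as translation by a two-torsion point. To identify the quotient I would pass to the ring of $g$-invariants: writing $E$ affinely as $y^2=x^4+1$ with $x=x_1/x_2$, the action becomes $(x,y)\mapsto(-x,-y)$, so the invariants $u\coloneqq x^2$ and $s\coloneqq xy$ satisfy $s^2=u^3+u$, which is a Weierstrass model of an elliptic curve with $j=1728$, hence isomorphic to $E$. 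Equivalently, $E$ has complex multiplication by $\IZ[\ii]$ and the relevant two-torsion point is the one fixed by the order-four automorphism, so that $T$ is the kernel of multiplication by $1+\ii$ and $E/T\cong E$.

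The hard part is precisely this last step, namely upgrading ``isogenous'' to ``isomorphic''. The translation reduction and the order count are formal and only yield that $E/G$ is isogenous to $E$; since a non-CM elliptic curve admits no cyclic self-isogeny of degree greater than one, an actual isomorphism can occur only because the curve is arithmetically special. The crux is therefore to exhibit the translation subgroup $T$ cut out by $\widetilde{\SL}(W)$ as the kernel of an endomorphism of $E$ --- in the single nontrivial case, the complex-multiplication endomorphism --- and this is exactly what the explicit invariant computation, carried out in the Appendix, verifies.
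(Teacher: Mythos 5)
Your proposal is correct and follows essentially the same route as the paper's own proof (which is delegated to the Appendix): both reduce, via the classification of invertible non-degenerate potentials of the form $x_0^2+f(x_1,x_2)$ under the Calabi--Yau constraint, to the fact that $\widetilde{\SL}(W)$ is trivial in every case except the Fermat quartic $x_0^2+x_1^4+x_2^4$ in $\IP(2,1,1)$, and both settle that single case through the $j=1728$/complex-multiplication structure of this curve. The only tactical difference is in that last step: you compute the quotient explicitly via invariants ($s^2=u^3+u$) and identify the translation subgroup with $\ker(1+\ii)$, whereas the paper descends the order-four automorphism $\mu\colon(x_0,x_1,x_2)\mapsto(x_0,\ii x_1,x_2)$, which commutes with the generator $\iota$ of $\widetilde{\SL}(W)$, to conclude that $E/\langle\iota\rangle$ also carries an order-four complex multiplication and is therefore isomorphic to $E$.
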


Assume now that the polynomial $W$ takes the form:
 \[
W=y_0^2+g(y_1,y_2,y_3).
 \]
The minimal resolutions of $X_W/\widetilde{G}$ and $X_{W^\top}/\widetilde{G}^\top$ are
K3 surfaces, that we denote by $S_G$ and $S^\top_{G^\top}$,  which admit a non-symplectic
involution induced by $y_0\mapsto -y_0$. 

\begin{theorem}\cite[Theorem~1.1]{ABS}\label{th:K3} Under the above assumptions,
the K3 surfaces $S_G$ and $S^\top_{G^\top}$ belong to lattice mirror families
$\cK_{L(r,a,\delta)}$ and $\cK_{L(20-r,a,\delta)}$. 
\end{theorem}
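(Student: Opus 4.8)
The plan is to determine the full lattice invariants $(r,a,\delta)$ of the non-symplectic involution $\sigma_G$ on $S_G$ and $(r^\top,a^\top,\delta^\top)$ of $\sigma^\top_{G^\top}$ on $S^\top_{G^\top}$ by a direct geometric computation, and then to check that they satisfy $r^\top=20-r$, $a^\top=a$ and $\delta^\top=\delta$; Theorem~\ref{th:mirrorK3} will then place the two surfaces in the announced lattice mirror families. Note that Theorem~\ref{th:bhcr} provides no information here, since for $n=4$ the Hodge diamond of a K3 surface is symmetric and the exchange $p\mapsto 2-p$ is automatic: the content of the statement is entirely lattice-theoretic and must be extracted from the geometry of the involution rather than from its orbifold cohomology.

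First I would describe $\sigma_G$ geometrically. Writing $W=y_0^2+g(y_1,y_2,y_3)$ exhibits $X_W$ as a double cover of $\IP(w_1,w_2,w_3)$ branched along $B=\{g=0\}$, with $\sigma$ the deck involution $y_0\mapsto-y_0$; the same holds after passing to $X_W/\widetilde{G}$ and taking the minimal resolution $S_G$. The fixed locus of $\sigma_G$ is then, in the notation of Nikulin, a disjoint union $C_g\cup E_1\cup\cdots\cup E_k$, where $C_g$ is the proper transform of (the image of) $B$ and the rational curves $E_i$ arise from resolving the quotient singularities and the singular points of $\IP(w_1,w_2,w_3)$ lying on $B$.

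Second, I would compute $(r,a,\delta)$. The genus $g$ of $C_g$ is obtained from an adjunction computation for $B$ in the weighted projective plane, corrected by the action of $\widetilde{G}$ and by the resolution, while $k$ is read off from the resolution data; Nikulin's formulas $2g=22-r-a$ and $2k=r-a$ then give $r=11-g+k$ and $a=11-g-k$. The remaining invariant $\delta$ is decided by a separate analysis of the discriminant form of the invariant lattice, equivalently of the local structure of $\Fix(\sigma_G)$. Running the identical analysis for $W^\top$ and $G^\top$ produces $(r^\top,a^\top,\delta^\top)$ from the genus $g^\top$ and count $k^\top$ of the transposed fixed configuration.

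The heart of the argument, and the main obstacle, is to show that transposition exchanges the invariants correctly. By the computation above, the desired relations $r^\top=20-r$ and $a^\top=a$ are equivalent to the two identities $g^\top=k+1$ and $k^\top=g-1$ relating the fixed loci of $\sigma_G$ and $\sigma^\top_{G^\top}$, which one must verify from the explicit combinatorics of $A\mapsto A^\top$ and $\widetilde{G}\mapsto\widetilde{G}^\top$. I expect this to require a case analysis over the atomic building blocks (Fermat, chain and loop summands) of the invertible polynomial $g$, tracking how each contributes to the genus, to the resolution curves, and to the discriminant form. Controlling $\delta^\top=\delta$ is the most delicate point, since $\delta$ is not determined by the numerical data $(r,a)$ alone. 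Once these identities are established, Theorem~\ref{th:mirrorK3} gives $L(r,a,\delta)'\cong L(20-r,a,\delta)=L(r^\top,a^\top,\delta^\top)$, so $S_G$ and $S^\top_{G^\top}$ lie in the lattice mirror families $\cK_{L(r,a,\delta)}$ and $\cK_{L(20-r,a,\delta)}$, as claimed.
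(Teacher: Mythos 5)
Your proposal is correct and follows essentially the same route as the actual proof: note that this theorem is not proved in the present paper at all but imported from \cite{ABS}, where the argument is exactly the one you describe --- realize $S_G$ and $S^\top_{G^\top}$ as resolutions of double covers with the deck involution $y_0\mapsto -y_0$, compute $(r,a,\delta)$ from the fixed locus via Nikulin's relations (your reformulation of the mirror identities as $g^\top=k+1$, $k^\top=g-1$ is correct), treat $\delta$ separately through the discriminant form, and check everything by a case analysis over the Fermat/chain/loop decomposition of the invertible potentials and the admissible groups. The only difference is one of completeness: the case-by-case verification that you defer as ``the heart of the argument'' is precisely the bulk of \cite{ABS}, carried out there in the classification tables of \cite[\S 7]{ABS}.
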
 

We refer to \cite{ABS} for a complete classification of these K3 surfaces
together with the corresponding values of the triple $(r,a,\delta)$.

\section{Borcea--Voisin mirror threefolds}\label{s:BV}

Let $E$ be an elliptic curve and $S$ be a K3 surface carrying 
non-symplectic automorphisms of order $m$ denoted $\sigma_E$ and $\sigma_S$  respectively.
Denoting by  $\omega_E$ a generator of  $H^{1,0}(E)$ and by $\omega_S$ a generator of $H^{2,0}(S)$ 
we assume that the automorphisms act as follows:
\[
\sigma_E^*(\omega_E)=\zeta^{m-1}_m\omega_E,\quad \sigma_S^*(\omega_S)=\zeta_m\omega_S,
\]
where $\zeta_m$ is a primitive $m$-th root of the unity.
 Clearly $m\in \{2,3,4,6\}$. 
 Consider the quotient
 \[
 Y_{E,S}:=\left(E\times S\right)/\langle\sigma_E\times \sigma_S\rangle. 
 \]
By a result of Bridgeland--King--Reid~\cite[Theorem 1.2]{BKR} the variety $Y_{E,S}$ admits a crepant resolution 
of singularities. Moreover its Hodge numbers do not depend on the chosen resolution (see~\cite{batyrev2,yasuda}).
The following result has been first proved in the case $m=2$ independently
by Borcea~\cite{borcea} and Voisin~\cite{voisin} and has been generalized for $m>2$ by Cattaneo--Garbagnati~\cite{CG}.

\begin{theorem} 
Any crepant resolution of singularities $Z_{E,S}$ of $Y_{E,S}$ is a Calabi--Yau manifold 
whose Hodge numbers depend only on the topological properties of the fixed loci of $\sigma_S^i$, $i=1,\ldots,m-1$.
\end{theorem}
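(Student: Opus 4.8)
The plan is to establish first the Calabi--Yau condition and then to reduce the computation of the Hodge numbers to orbifold cohomology. Set $\sigma:=\sigma_E\times\sigma_S$ and $G:=\langle\sigma\rangle\cong\IZ/m\IZ$. On the holomorphic three--form one computes
\[
\sigma^\ast(\omega_E\wedge\omega_S)=\zeta_m^{m-1}\zeta_m\,(\omega_E\wedge\omega_S)=\omega_E\wedge\omega_S,
\]
so that $\omega_E\wedge\omega_S$ is $G$--invariant and descends to a nowhere vanishing holomorphic three--form on the smooth locus of $Y_{E,S}$. Since $G$ acts preserving the volume form, $Y_{E,S}$ is Gorenstein with trivial canonical class, and any crepant resolution $Z_{E,S}$ inherits a trivial canonical bundle. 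The remaining Calabi--Yau conditions $h^{1,0}(Z_{E,S})=h^{2,0}(Z_{E,S})=0$ will follow from the cohomological analysis below.

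As recalled in the statement, the Hodge numbers of a crepant resolution coincide with the Chen--Ruan orbifold Hodge numbers of the quotient stack $[(E\times S)/G]$ and are independent of the resolution, so it suffices to compute $H^{p,q}_\orb([(E\times S)/G])$. Since $G$ is cyclic, hence abelian, every centralizer is the whole group and the Chen--Ruan cohomology is a direct sum over the group elements:
\[
H^{p,q}_\orb\big([(E\times S)/G]\big)\;\cong\;\bigoplus_{i=0}^{m-1}\;\Big(\,\bigoplus_{F\subset (E\times S)^{\sigma^i}}H^{p-\age_F,\,q-\age_F}(F)\Big)^{G},
\]
where $F$ runs over the connected components of the fixed locus $(E\times S)^{\sigma^i}=E^{\sigma_E^i}\times S^{\sigma_S^i}$, $\age_F$ is the age of $\sigma^i$ along $F$, and the invariants are taken for the residual $G$--action.

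I would then examine the sectors individually. In the untwisted sector $i=0$ one has, by the K\"unneth formula, the invariant part $\big(H^\ast(E)\otimes H^\ast(S)\big)^G$, whose bigraded dimensions are controlled by the dimensions of the eigenspaces of $\sigma_S^\ast$ on $H^\ast(S,\IC)$; combined with $h^{1,0}(S)=h^{2,0}(E)=0$ this already yields $h^{1,0}=h^{2,0}=0$ and $h^{3,0}=1$ for the untwisted part. For $i\neq 0$ the locus $E^{\sigma_E^i}$ is a finite set of points whose cardinality depends only on $m$ and $i$, while $S^{\sigma_S^i}$ is a disjoint union of smooth curves and isolated points; hence every term $H^\ast(F)$, the residual $G$--action, and the ages $\age_F$ are determined by the number and genera of the fixed curves and by the number of isolated fixed points of $\sigma_S^i$. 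Because $\sigma$ preserves the volume form, each nonzero age is a positive integer, so every twisted sector sits in bidegrees $(p,q)$ with $p,q\geq 1$ and contributes nothing to $h^{1,0}$, $h^{2,0}$ or $h^{3,0}$; this confirms the Calabi--Yau conditions.

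The main technical point is the passage from the holomorphic and lattice--theoretic data to the topology of the fixed loci. The eigenspace dimensions of $\sigma_S^\ast$ on $H^\ast(S,\IC)$ that enter the untwisted sector are pinned down by the Lefschetz fixed point theorems: the topological formula $\sum_i(-1)^i\tr\big(\sigma_S^{\ast i}\mid H^i(S)\big)=\ee\big(S^{\sigma_S^i}\big)$ together with the holomorphic Lefschetz number $\tr\big(\sigma_S^{\ast i}\mid H^\ast(\cO_S)\big)$ expresses each dimension in terms of the relevant Euler characteristics, hence of the number and genera of the fixed curves and the number of isolated fixed points. Carrying this out, together with the explicit computation of the finitely many ages for $m\in\{2,3,4,6\}$, shows that all orbifold Hodge numbers --- and therefore the Hodge numbers of $Z_{E,S}$ --- depend only on the topology of the fixed loci of the powers $\sigma_S^i$, as claimed.
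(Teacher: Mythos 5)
The paper does not actually prove this theorem: it is quoted from the literature, namely Borcea \cite{borcea} and Voisin \cite{voisin} for $m=2$ and Cattaneo--Garbagnati \cite{CG} for $m>2$, the text only recording the two ingredients that make the statement meaningful (existence of a crepant resolution by Bridgeland--King--Reid \cite{BKR}, and independence of the Hodge numbers on the resolution by \cite{batyrev2,yasuda}). Your strategy --- invariance of $\omega_E\wedge\omega_S$ for the canonical bundle, then Yasuda's theorem to replace $Z_{E,S}$ by the Chen--Ruan cohomology of $[(E\times S)/G]$, the sector decomposition, K\"unneth plus the Lefschetz fixed point formulas for the untwisted sector, and integrality of the ages to kill twisted contributions to $h^{p,0}$ --- is sound and is essentially the modern form of those proofs. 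For $m=2$ (the only case the paper uses afterwards) and for $m=3$ your argument is in fact complete: there every twisted sector is indexed by a \emph{generator} of $G$, which acts trivially on its own fixed locus, so the residual invariants are the full cohomology of $(E\times S)^{\sigma^i}$, and the count reproduces $h^{1,1}=5+3r-2a$, $h^{2,1}=65-3r-2a$.

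The genuine gap is for $m\in\{4,6\}$, in the sentence asserting that ``every term $H^\ast(F)$, the residual $G$-action, and the ages $\age_F$ are determined by the number and genera of the fixed curves and by the number of isolated fixed points of $\sigma_S^i$''. As literally stated this is false, and it is exactly where the content of the theorem lies for these $m$. For the sectors with $\langle\sigma^i\rangle\neq G$ (\ie $i=2$ for $m=4$, and $i=2,3,4$ for $m=6$) the group $G$ acts \emph{nontrivially} on $(E\times S)^{\sigma^i}$: $\sigma$ permutes the points of $E^{\sigma_E^i}$ and the components of $\Fix(\sigma_S^i)$, and it acts nontrivially on $H^1$ of the curves it preserves; the invariant dimension involves, e.g., the genus of $C/\sigma_S$ for a curve $C\subset\Fix(\sigma_S^i)$, which by Riemann--Hurwitz is governed by the fixed points of $\sigma_S$ lying on $C$ --- data not captured by the number and genera of the curves in $\Fix(\sigma_S^i)$ alone. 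Similarly, the age of $\sigma^i$ at an isolated fixed point depends on the local eigenvalue type and not merely on the number of such points: for $m=4$, $i=2$, the local type $(\zeta_4,\zeta_4)$ on $S$ gives total age $1$ while $(\zeta_4^3,\zeta_4^3)$ gives total age $2$. This finer equivariant and local data can still reasonably be called ``topological properties of the fixed loci'', and tracking it is precisely what \cite{CG} does; but your write-up asserts this step rather than proving it, so as it stands the argument is only complete for $m=2,3$.
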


In the sequel we consider the case $m=2$. We denote by $\cZ_{r,a,\delta}$ the family
of Borcea--Voisin Calabi--Yau threefolds of type $Z_{E,S}$ for $S\in\cK_{L(r,a,\delta)}$.
The notation does not take care of the elliptic curve: there is no comment on this choice in Borcea's 
work~\cite{borcea}, whereas Voisin~\cite{voisin} gives more details.
Our main result (see  Section~\ref{ss:main}) makes necessary an explicit choice of the elliptic curve.
 
For any $Z\in\cZ_{r,a,\delta}$ we have (see for instance~\cite{borcea,voisin}):
$$
h^{1,1}(Z)=5+3r-2a,\quad h^{2,1}(Z)=65-3r-2a.
$$
From this computation we see that lattice mirror symmetry for K3 surfaces naturally 
implies a mirror symmetry for Borcea--Voisin Calabi--Yau 
threefolds. If the triple $(r,a,\delta)$ satisfies the conditions of Theorem~\ref{th:mirrorK3},
generic elements ${Z\in\cZ_{r,a,\delta}}$ and ${Z'\in\cZ_{20-r,a,\delta}}$ satisfy:
\[
h^{1,1}(Z)=h^{2,1}(Z'),\quad h^{1,1}(Z')=h^{2,1}(Z).
\]
Deepest motivations supporting the fact that the families $\cZ_{r,a,\delta}$ and $\cZ_{20-r,a,\delta}$ 
are mirror symmetric have been given in \cite{voisin} or \cite[\S 4.4.2]{coxkatz}.
In the next section we provide further evidence for this,
showing that the Borcea--Voisin mirror Calabi--Yau threefolds can be obtained
using the BHCR transposition rule.

\section{BHCR mirrors of Borcea--Voisin threefolds}

\subsection{Birational projective models}\label{ss:birproj}

In order to relate the Borcea--Voisin threefolds $Z_{E,S}$ to the BHCR
setup, we need to provide birational models for the quotient varieties $Y_{E,S}$ as hypersurfaces in weighted
projective spaces given by Delsarte type polynomials. For this we introduce the so-called \emph{twist maps}
already used by Borcea~\cite{borcea} (but the equations obtained there were not of Delsarte type)
and further studied in \cite{GKY,HS}. Consider two hypersurfaces in weighted projective 
spaces defined as follows:
\begin{align*}
X_1&\coloneqq\{x_0^\ell+f(x_1,\dots,x_m)=0\}\subset \IP(u_0,u),\\
X_2&\coloneqq\{y_0^\ell+g(y_1,\dots,y_n)=0\}\subset \IP(v_0,v),
\end{align*}
where $\ell\geq 2$, $u=(u_1,\dots,u_m)$ and $v=(v_1,\dots,v_n)$. We assume that ${\gcd(u_0,v_0)=1}$.
Let $s_0,t_0\in \IZ$ be such that $0\leq s_0<v_0,\ 0\leq t_0<u_0$ and 
\[
s_0u_0+1\equiv 0\mod v_0,\quad t_0v_0+1\equiv 0\mod u_0.
\]
We put $s\coloneqq(s_0u_0+1)/v_0$ and $t\coloneqq(t_0v_0+1)/u_0$.
The {\em twist map} associated to this data is the rational map
\[
\Phi\colon \IP(u_0,u)\times \IP(v_0,v)\dasharrow \IP(v_0u,u_0v)
\]
defined by:
\[
\Phi((x_0,x), (y_0,y))=
\left((x_0^{s_0}y_0^{t})^{u_1}x_1,\dots,(x_0^{s_0}y_0^t)^{u_m}x_m,(x_0^sy_0^{t_0})^{v_1}y_1,\dots, (x_0^sy_0^{t_0})^{v_n}y_n\right).
\]

\begin{remark}\label{rm:nat} From this expression it is clear that $\Phi$ is rational.
By multiplying the coordinates by $x_0^{-\frac{s}{u_0}}y_0^{-\frac{t_0}{u_0}}\in \mathbb C^*$  
the map $\Phi$ takes the more natural form:
\[
\Phi((x_0,x), (y_0,y))=\left(\left(\frac{y_0}{x_0}\right)^{\frac{u_1}{u_0}}x_1,\dots,\left(\frac{y_0}{x_0}\right)^{\frac{u_m}{u_0}}x_m,y_1,\dots,y_n\right).
\]

\end{remark}

We still denote by $(x,y)\coloneqq(x_1,\ldots,x_m,y_1,\ldots,y_n)$ the coordinates in $\IP(v_0u,u_0v)$. 
Consider the restriction map $\Phi_{|X_1\times X_2}\colon X_1\times X_2\dasharrow \IP(v_0u,u_0v)$.
The closure of its image is clearly the irreducible hypersurface
$$
X\coloneqq \{f(x)-g(y)=0\}\subset \IP(v_0u,u_0v).
$$
Observe that, since ${\gcd(u_0,v_0)=1}$, the  weighted projective space $\IP(v_0u,u_0v)$  is normalized.
Denote by $\mu_\ell$ the cyclic group of $\ell$-th roots of the unity. The following result is clear:

\begin{prop}\label{prop:model_twist}
The dominant rational map $\Phi_{|X_1\times X_2}\colon X_1\times X_2\dasharrow X$ has  degree~$\ell$ 
and induces a birational equivalence between $X$ and the quotient $(X_1\times X_2)/\mu_\ell$,
where the cyclic group $\mu_\ell$ acts by
\[
\gamma\cdot ((x_0,x),(y_0,y))=((\gamma x_0,x),(\gamma y_0,y))\quad \forall\gamma\in \mu_{\ell}.
\]
\end{prop}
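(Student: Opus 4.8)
The plan is to verify the three assertions in turn: that $\Phi_{|X_1\times X_2}$ takes values in $X$, that it is invariant under the $\mu_\ell$-action, and that a generic fibre is a single free $\mu_\ell$-orbit, which yields simultaneously the degree and the birationality of the induced map. Since $\dim(X_1\times X_2)=\dim X=m+n-2$ and $X$ is irreducible, once the image is shown to lie in $X$ it suffices to analyse a generic fibre.

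First I would check that $\Phi$ maps $X_1\times X_2$ into $X$. Writing $\xi_i\coloneqq(x_0^{s_0}y_0^{t})^{u_i}x_i$ and $\eta_j\coloneqq(x_0^{s}y_0^{t_0})^{v_j}y_j$ for the image coordinates, and using that $f$ is $u$-weighted homogeneous of degree $\ell u_0$ and $g$ is $v$-weighted homogeneous of degree $\ell v_0$, one gets
\[
f(\xi)=(x_0^{s_0}y_0^{t})^{\ell u_0}f(x),\qquad g(\eta)=(x_0^{s}y_0^{t_0})^{\ell v_0}g(y).
\]
Substituting the defining relations $f(x)=-x_0^\ell$, $g(y)=-y_0^\ell$ together with $sv_0=s_0u_0+1$ and $tu_0=t_0v_0+1$ collapses both expressions to $-x_0^{\ell sv_0}y_0^{\ell tu_0}$, so that $f(\xi)-g(\eta)=0$ and the image lies in $X$.

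Next I would establish $\mu_\ell$-invariance, for which the normalised form of Remark~\ref{rm:nat} is the cleanest tool: there $\Phi$ depends on $(x_0,y_0)$ only through the ratio $y_0/x_0$, which is fixed by the diagonal substitution $(x_0,y_0)\mapsto(\gamma x_0,\gamma y_0)$, while the last coordinates $y_1,\dots,y_n$ do not involve $x_0,y_0$ at all. The fractional exponents $u_i/u_0$ are harmless: a change of branch multiplies the first block of coordinates by $\zeta^{u_i}$ with $\zeta^{u_0}=1$, and using $\gcd(u_0,v_0)=1$ this is absorbed by a rescaling of $\IP(v_0u,u_0v)$. Hence $\Phi$ factors through $(X_1\times X_2)/\mu_\ell$. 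The action itself is generically free: if $\gamma\cdot[(x_0,x)]=[(x_0,x)]$ in $\IP(u_0,u)$ for generic $x$, there is $c$ with $c^{u_i}=1$ for all $i\geq 1$ and $c^{u_0}=\gamma$; well-formedness of $\IP(u_0,u)$ gives $\gcd(u_1,\dots,u_m)=1$, forcing $c=1$ and $\gamma=1$. Thus the quotient map has degree $\ell$.

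Finally I would show that the induced map $(X_1\times X_2)/\mu_\ell\to X$ is birational by inverting it over a generic $[\xi,\eta]\in X$. One recovers $[(y_0,y)]$ by setting $y_j=\eta_j$ and choosing $y_0$ with $y_0^\ell=-g(\eta)$, and then $[(x_0,x)]$ by setting $x_i=(x_0/y_0)^{u_i/u_0}\xi_i$; the identity $f(x)=(x_0/y_0)^\ell f(\xi)$ shows that $x_0^\ell+f(x)=0$ reduces precisely to $y_0^\ell=-f(\xi)$, which holds because $f(\xi)=g(\eta)$ on $X$, so the point of $X_1$ is well defined and independent of the projective choice of $x_0$. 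The only genuine freedom is the $\ell$-fold choice of the root $y_0$, and replacing $y_0$ by $\gamma y_0$ transforms both recovered factors by $\gamma$, i.e. exactly by the diagonal action of $\gamma\in\mu_\ell$; hence the generic fibre is a single $\mu_\ell$-orbit. Combined with the previous step this gives $\deg\Phi=\ell$ and shows the induced map is generically injective, hence birational. The main obstacle throughout is the bookkeeping of the several weighted-projective rescalings and of the branch ambiguity of the fractional powers: the crux is to check that each of these either cancels or coincides with the diagonal $\mu_\ell$-action, rather than producing extra sheets in the fibre.
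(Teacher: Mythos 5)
Your proof is correct. The paper offers no argument at all for this proposition (it is introduced with ``The following result is clear''), and your verification — checking via weighted homogeneity and the relations $sv_0=s_0u_0+1$, $tu_0=t_0v_0+1$ that the image satisfies $f(\xi)=g(\eta)$, establishing $\mu_\ell$-invariance and generic freeness (using that the source spaces are normalized, which is indeed the paper's standing assumption), and constructing the rational inverse to see that a generic fibre is exactly one $\mu_\ell$-orbit — is precisely the intended ``clear'' argument, written out in full.
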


\begin{lemma}\label{qs}
Assume that $X_1\subset \IP(u_0,u)$ and $X_2\subset \IP(v_0,v)$ 
are non-degenerate Calabi--Yau varieties and that $m+n\geq 5$.
Then $X\subset \IP(v_0u,u_0v)$ is non-degenerate 
and it is a Calabi--Yau variety if and only if $\ell=2$.
\end{lemma}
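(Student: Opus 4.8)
The plan is to analyze the two assertions separately, relating the relevant invariants of $X$ to those of $X_1$ and $X_2$ through the explicit weight system $\IP(v_0u,u_0v)$.

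First I would verify non-degeneracy. Since $X_1$ and $X_2$ are non-degenerate, the cones $f(x)=0$ and $g(y)=0$ are smooth away from their respective origins, meaning the partial derivatives $\partial_i f$ do not vanish simultaneously except where all $x_j=0$, and similarly for $g$. For $X=\{f(x)-g(y)=0\}$ the partials are $\partial_{x_i}f$ and $-\partial_{y_j}g$. A simultaneous zero of all these forces (by non-degeneracy of each piece) all $x_j=0$ and all $y_j=0$, i.e. the origin of $\IP(v_0u,u_0v)$. Hence the affine cone over $X$ is smooth outside the origin and $X$ is non-degenerate. The hypothesis $m+n\geq 5$ guarantees $X$ has dimension $\geq 2$ so that this codimension bookkeeping actually produces a variety of the expected type rather than a low-dimensional degenerate case.

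Next I would handle the Calabi--Yau condition, which is the quantitative heart of the statement. The weights of $X$ are $(v_0u_1,\dots,v_0u_m,u_0v_1,\dots,u_0v_n)$ and the degree of the defining polynomial $f(x)-g(y)$ must be computed. The Calabi--Yau condition for $X$ reads $\sum_i v_0u_i+\sum_j u_0v_j=\deg(f-g)$. The key observation is that, since $X_1$ is Calabi--Yau in $\IP(u_0,u)$, we have $u_0+\sum_i u_i=d_1=\ell u_0$ (because $x_0^\ell$ has degree $d_1$, so $d_1=\ell u_0$ and $\deg f=d_1$), giving $\sum_i u_i=(\ell-1)u_0$; symmetrically $\sum_j v_j=(\ell-1)v_0$. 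Meanwhile $\deg(f-g)$ in the new grading equals $v_0\cdot d_1^{(x)}=v_0\cdot\ell u_0$, where I must check that $f$ and $g$ acquire the common degree $\ell u_0v_0$ after rescaling; indeed $\deg_X f=v_0\deg_{X_1}f=v_0\ell u_0$ and $\deg_X g=u_0\deg_{X_2}g=u_0\ell v_0$, which agree. Then $\sum v_0u_i+\sum u_0v_j=v_0(\ell-1)u_0+u_0(\ell-1)v_0=2(\ell-1)u_0v_0$, and this equals $\deg(f-g)=\ell u_0v_0$ precisely when $2(\ell-1)=\ell$, i.e. $\ell=2$. This yields the asserted equivalence.

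The main obstacle I anticipate is bookkeeping the degree of the polynomial $f(x)-g(y)$ correctly under the reweighting induced by the twist map, since $f$ and $g$ were originally homogeneous of degrees $\ell u_0$ and $\ell v_0$ in two different gradings, and one must confirm they become simultaneously homogeneous of the single common degree $\ell u_0v_0$ in $\IP(v_0u,u_0v)$ so that $f-g$ is well-defined and the computation above is legitimate. This compatibility is exactly what the twist map construction and the relations $su_0+\,$(etc.)$\,\equiv 0$ are designed to arrange, but it should be stated and checked cleanly. A secondary point worth a remark is the normalization of the weights $\IP(v_0u,u_0v)$, already noted in the text to follow from $\gcd(u_0,v_0)=1$, which one needs so that the weighted projective space is of the normalized form required by the Calabi--Yau criterion recalled in Section~\ref{ss:BHCR}.
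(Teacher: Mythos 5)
Your argument is correct and essentially identical to the paper's: the non-degeneracy check (which the paper simply calls clear) and the equivalence via the weight--degree comparison $v_0\sum_i u_i+u_0\sum_j v_j=2(\ell-1)u_0v_0$ against $\deg X=\ell u_0v_0$, forcing $\ell=2$, are exactly the paper's steps. The one ingredient the paper includes that you elide is well-formedness of the hypersurface $X$ itself (cited from \cite[Proposition 6]{dimcaart}), which together with quasi-smoothness is the hypothesis actually needed to invoke the Calabi--Yau criterion of \cite[Lemma 1.11]{cortigo}; normalization of the ambient weights alone does not quite cover this adjunction-type argument.
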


\begin{proof}
The assertion about the non-degeneracy is clear. 
By \cite[Proposition 6]{dimcaart} $X$ is also well-formed.
Thus by \cite[Lemma 1.11]{cortigo} 
$X$ is a Calabi--Yau variety if and only 
if satisfies the Calabi--Yau condition.
Since $X_1$ and $X_2$ are
Calabi--Yau varieties, then $(\ell-1) u_0=u_1+\dots+u_m$ and
$(\ell-1)v_0=v_1+\dots+v_n$.
Since $X$ has degree $u_0v_0\ell$, the variety $X$ is Calabi--Yau if and only if 
the following condition holds:
\[
u_0v_0\ell=v_0\sum_{i=1}^m u_i+u_0\sum_{i=1}^n v_i=2u_0v_0(\ell -1),
\]
which implies $\ell=2$. This also follows from the fact 
that the action of the group $\mu_\ell$ on $X_1\times X_2$
is non-symplectic except if $\ell=2$.
\end{proof}

In the particular case of Borcea--Voisin threefolds we deduce the following
birational models, already given by Borcea \cite{borcea}. 

\begin{prop}\label{prop:model_borcea}
Let $E\coloneqq\{x_0^2+f(x_1,x_2)=0\}\subset \IP(u_0,u_1,u_2)$ 
be an elliptic curve and 
$S\coloneqq\{y_0^2+g(y_1,y_2,y_3)=0\}\subset \IP(v_0,v_1,v_2,v_3)$ 
be a K3 surface.
\begin{enumerate}
\item If $(u_0,u_1,u_2)=(2,1,1)$ and $v_0$ is odd, then 
$Y_{E,S}$ is birational to the hypersurface
\[
\{f(x)-g(y)=0\}\subset \IP(v_0,v_0,2v_1,2v_2,2v_3).
\]
\item If $(u_0,u_1,u_2)=(3,2,1)$ and $v_0$ is not divisible by $3$,
then $Y_{E,S}$ is birational to the hypersurface
\[
\{f(x)-g(y)=0\}\subset \IP(2v_0,v_0,3v_1,3v_2,3v_3).
\]
\end{enumerate}
\end{prop}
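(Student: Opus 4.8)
The plan is to verify that each case of Proposition~\ref{prop:model_borcea} is a direct application of the twist-map machinery (Proposition~\ref{prop:model_twist}) combined with the birational characterization of $Y_{E,S}$. Since $m=2$ here, we take $\ell=2$ throughout, so that the quotient $(E\times S)/\mu_2$ produced by the twist map coincides (up to birational equivalence) with $Y_{E,S}=(E\times S)/\langle\sigma_E\times\sigma_S\rangle$.

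Let me work through the structure. First I would set $X_1=E\subset\IP(u_0,u_1,u_2)$ and $X_2=S\subset\IP(v_0,v_1,v_2,v_3)$, matching the notation of the twist-map setup with $m=2$, $n=3$. The hypotheses of Lemma~\ref{qs} are met: both $E$ and $S$ are non-degenerate Calabi--Yau varieties and $m+n=5\geq 5$. I must check the coprimality condition $\gcd(u_0,v_0)=1$ required to define the twist map: in case (1) we have $u_0=2$ and $v_0$ odd, so $\gcd(2,v_0)=1$; in case (2) we have $u_0=3$ and $v_0$ not divisible by $3$, so $\gcd(3,v_0)=1$. With these conditions satisfied, Proposition~\ref{prop:model_twist} gives a birational equivalence between the hypersurface $X=\{f(x)-g(y)=0\}\subset\IP(v_0u,u_0v)$ and the quotient $(E\times S)/\mu_2$, where $\mu_2$ acts by $\gamma\cdot((x_0,x),(y_0,y))=((\gamma x_0,x),(\gamma y_0,y))$.

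The next step is to identify the weighted projective space $\IP(v_0u,u_0v)$ explicitly in each case. By definition $v_0u=(v_0u_1,v_0u_2)$ and $u_0v=(u_0v_1,u_0v_2,u_0v_3)$. In case (1), $(u_0,u_1,u_2)=(2,1,1)$ gives $v_0u=(v_0,v_0)$ and $u_0v=(2v_1,2v_2,2v_3)$, so $\IP(v_0u,u_0v)=\IP(v_0,v_0,2v_1,2v_2,2v_3)$, exactly as claimed. In case (2), $(u_0,u_1,u_2)=(3,2,1)$ gives $v_0u=(2v_0,v_0)$ and $u_0v=(3v_1,3v_2,3v_3)$, yielding $\IP(2v_0,v_0,3v_1,3v_2,3v_3)$. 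The remaining point is to confirm that the $\mu_2$-action from the twist map agrees with the involution $\sigma_E\times\sigma_S$ defining $Y_{E,S}$: the generator of $\mu_2$ sends $x_0\mapsto-x_0$ and $y_0\mapsto-y_0$, which is precisely the simultaneous action of $\sigma_E$ (induced by $x_0\mapsto-x_0$, the standard non-symplectic involution on $E$) and $\sigma_S$ (induced by $y_0\mapsto-y_0$). Hence $(E\times S)/\mu_2=Y_{E,S}$, and composing the two birational equivalences gives the result.

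The only genuinely non-routine point is the last identification of the group action, since the author must make sure the twist-map $\mu_2$ really is the Borcea--Voisin involution and not some other $\mu_2$; but this is immediate from the explicit form $x_0\mapsto-x_0$, $y_0\mapsto-y_0$ and the standing convention (Section~\ref{s:BV}) that $\sigma_E$ and $\sigma_S$ are induced by negation of the square-root variables. Everything else is a mechanical substitution of the specific weight vectors into the general twist-map construction, so I expect the proof to be short, with the bulk of the content being the bookkeeping of weights and the verification of $\gcd(u_0,v_0)=1$.
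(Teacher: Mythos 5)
Your proposal is correct and follows exactly the paper's route: the paper states Proposition~\ref{prop:model_borcea} as a direct specialization of the twist-map result (Proposition~\ref{prop:model_twist}) with $\ell=2$, and your verification of the coprimality hypotheses, the weight bookkeeping, and the identification of the $\mu_2$-action with $\sigma_E\times\sigma_S$ is precisely the content of that deduction. The only cosmetic slip is the phrase ``since $m=2$ here, we take $\ell=2$'': the value $\ell=2$ comes from the exponent of $x_0$ and $y_0$ in the defining equations, not from the number of remaining variables, but this does not affect the argument.
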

 
\begin{remark}Note that if $v_0$ is divisible by $6$ this construction does not provide
birational projective models, in fact the twist map is not well defined.
\end{remark}

\subsection{Behavior of the transposed groups}\label{ss:behavior}

Let us recall the notation. We consider an elliptic curve $E$ given by an equation of type
$$
W_E\coloneqq x_0^2+f(x_1,x_2)\subset\IP(u_0,u_1,u_2)
$$
and a K3 surface $S$ given by an equation of type
$$
W_S\coloneqq y_0^2+g(y_1,y_2,y_3)\subset\IP(v_0,v_1,v_2,v_3).
$$
Assuming that $\gcd(u_0,v_0)=1$, we consider the birational model for the Borcea--Voisin
Calabi-Yau threefold $Y_{E,S}\subset \IP(v_0u, u_0v)$ given by the equation
\[
W_{E,S}\coloneqq f(x_1,x_2)-g(y_1,y_2,y_3).
\]
We further assume that the polynomials $W_E$ and $W_S$ are of Delsarte type, non-degenerate, 
defined by invertible matrices,
and that they define $E$ and $S$ in normalized weighted projective spaces. It is easy to check that under these
assumptions the polynomial~$W_{E,S}$ satisfies the same properties.

We denote by $A_E$ the matrix of exponents of $W_E$ 
and by $\hat A_E$ its submatrix obtained deleting the first row and column.
We define similarly $A_S$  and $\hat A_S$.
Clearly the matrix  of exponents of $W_{E,S}$ is $A_{E,S}=\diag\left(\hat A_E,\hat A_S\right)$.

We denote by $E^\top$ the elliptic curve defined by the transposed polynomial~$W_E^\top$ in the weighted projective space $\IP(u_0,u)^\top=\IP(u_0^\top,u^\top)$. 
We denote similarly by~$S^\top$
the (singular) K3 surface defined by~$W_S^\top$ in~$\IP(v_0^\top,v^\top)$. 
\begin{lemma}\label{lemma2}
 If $\gcd(u_0^\top,v_0^\top)=1$, then
$W_{E^\top,S^\top}=W_{E,S}^\top$ and this equation defines a hypersurface in the weigthed
 projective space $\IP(v_0u,u_0v)^\top=\IP(v_0^\top u^\top,u_0^\top v^\top)$.
\end{lemma}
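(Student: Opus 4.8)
The plan is to split the statement into two independent assertions: the equality of polynomials $W_{E^\top,S^\top}=W_{E,S}^\top$, and the equality of the two ambient weighted projective spaces. The first is essentially formal. Since the exponent matrix of $W_{E,S}$ is $A_{E,S}=\diag(\hat A_E,\hat A_S)$, transposing gives $A_{E,S}^\top=\diag(\hat A_E^\top,\hat A_S^\top)$. On the other hand $W_E^\top$ has exponent matrix $\diag(2,\hat A_E^\top)$, so $W_E^\top=x_0^2+f^\top(x_1,x_2)$ where $f^\top$ is precisely the Delsarte transpose with matrix $\hat A_E^\top$; likewise $g^\top$ has matrix $\hat A_S^\top$. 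Hence the twist-map model $W_{E^\top,S^\top}=f^\top(x_1,x_2)-g^\top(y_1,y_2,y_3)$ of the pair $(E^\top,S^\top)$ has exponent matrix $\diag(\hat A_E^\top,\hat A_S^\top)=A_{E,S}^\top$, which is exactly the matrix of $W_{E,S}^\top$. Because all coefficients are normalized to one (so the minus sign in $f-g$ is immaterial for the transpose construction), the two polynomials coincide.

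For the second assertion I would compute charges. By definition $\IP(v_0u,u_0v)^\top=\IP(w_{E,S}^\top)$, where $w_{E,S}^\top$ is the smallest positive integer multiple of the charge vector $q_{E,S}^\top=(A_{E,S}^\top)^{-1}{\bf 1}$. Since $A_{E,S}^\top$ is block diagonal, the inverse splits and $q_{E,S}^\top=\bigl((\hat A_E^\top)^{-1}{\bf 1},\,(\hat A_S^\top)^{-1}{\bf 1}\bigr)$. I then relate these blocks to the weights $u^\top,v^\top$ of $E^\top,S^\top$: because $W_E^\top=x_0^2+f^\top$ is weighted homogeneous, the $x_0^2$ term forces $\deg W_E^\top=2u_0^\top$, so $u^\top=2u_0^\top q_E^\top$ with $q_E^\top=(1/2,(\hat A_E^\top)^{-1}{\bf 1})$; reading off the last entries gives $(u_1^\top,u_2^\top)=2u_0^\top(\hat A_E^\top)^{-1}{\bf 1}$, and symmetrically $(v_1^\top,v_2^\top,v_3^\top)=2v_0^\top(\hat A_S^\top)^{-1}{\bf 1}$.

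Substituting these into the twist-map weight vector yields
\[
(v_0^\top u^\top,u_0^\top v^\top)=2u_0^\top v_0^\top\bigl((\hat A_E^\top)^{-1}{\bf 1},\,(\hat A_S^\top)^{-1}{\bf 1}\bigr)=2u_0^\top v_0^\top\, q_{E,S}^\top,
\]
so the weights produced by applying the twist-map construction of Section~\ref{ss:birproj} to $E^\top$ and $S^\top$ are a positive integer multiple of $q_{E,S}^\top$. As $w_{E,S}^\top$ is the smallest such multiple, the two vectors are proportional, and since $\IP(\cdot)$ is invariant under rescaling of the weights this gives $\IP(v_0^\top u^\top,u_0^\top v^\top)=\IP(v_0u,u_0v)^\top$. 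The hypothesis $\gcd(u_0^\top,v_0^\top)=1$ enters exactly here: it is what makes the twist-map construction applicable to the pair $(E^\top,S^\top)$ and guarantees that $\IP(v_0^\top u^\top,u_0^\top v^\top)$ is normalized, so that it is legitimate to regard $W_{E^\top,S^\top}$ as cutting out a hypersurface there.

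I expect no serious obstacle: the whole argument is a bookkeeping verification resting on the block-diagonal shape of $A_{E,S}$ and on the degree identity $\deg W_E^\top=2u_0^\top$. The one point deserving genuine care is the matching of normalizations, namely checking that the proportionality factor $2u_0^\top v_0^\top$ creates no real discrepancy between the ``smallest integer multiple of the charges'' weights $w_{E,S}^\top$ and the twist-map weights $(v_0^\top u^\top,u_0^\top v^\top)$; this is settled by the rescaling invariance of weighted projective spaces together with the normalization ensured by $\gcd(u_0^\top,v_0^\top)=1$.
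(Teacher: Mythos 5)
Your proof is correct and follows essentially the same route as the paper: the block-diagonal identity $A_{E,S}^\top=\diag(\hat A_E^\top,\hat A_S^\top)$ gives the equality of polynomials, and the charge computation $q^\top=\tfrac12\bigl((u_0^\top)^{-1}u^\top,(v_0^\top)^{-1}v^\top\bigr)$ together with the degree identity $\deg W_E^\top=2u_0^\top$ identifies the weights, exactly as in the paper. The only (cosmetic) difference is the wrap-up: where you argue via proportionality plus normalization and rescaling invariance of $\IP(\cdot)$, the paper uses $\gcd(u_0^\top,v_0^\top)=1$ to assert directly that $2u_0^\top v_0^\top$ is the smallest integer multiplier of $q^\top$, which yields literal equality of the weight vectors $(v_0u,u_0v)^\top=(v_0^\top u^\top,u_0^\top v^\top)$ --- the form actually needed later, since $\deg(W_{E,S}^\top)=2u_0^\top v_0^\top$ enters the order count in Lemma~\ref{lemma3}; your normalization remark does recover this equality, so no real gap remains.
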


\begin{proof}
Since $A_{E,S}^\top=\diag(\hat A_E^\top, \hat A_S^\top)$ we have $W_{E^\top,S^\top}=W_{E,S}^\top$.
The charges of $W_{E^\top,S^\top}$ are by definition 
\[
q^\top\coloneqq (A_{E,S}^\top)^{-1}{\bf 1}=\left(
\begin{matrix}
(\hat A_E^\top)^{-1}{\bf 1}\\
(\hat A_S^\top)^{-1}{\bf 1}\\
\end{matrix}
\right).
\]
The degree $d_{E^\top}=2u_0^\top$ of $E^\top$ satisfies $d_{E^\top}(\hat A_E^\top)^{-1}{\bf 1}=u^\top$
and the degree $d_{S^\top}=2v_0^\top$ of $S^\top$ satisfies $d_{S^\top}(\hat A_S^\top)^{-1}{\bf 1}=v^\top$,
hence
$$
q^\top=\frac{1}{2}\left(\begin{matrix}
(u_0^\top)^{-1}u^\top\\
(v_0^\top)^{-1}v^\top
\end{matrix}
\right).
$$
Since $\gcd(u_0^\top,v_0^\top)=1$, the smallest integer multiple of $q^\top$ is 
\[
(2u_0^\top v_0^\top)q^\top=
\left(
\begin{matrix}
 v_0^\top u^\top\\
u_0^\top v^\top
\end{matrix}
\right),
\]
hence $W_{E^\top,S^\top}$ has degree $2u_0^\top v_0^\top$ and the transposed weights
satisfy 
$
(v_0u,u_0v)^\top=(v_0^\top u^\top,u_0^\top v^\top).
$
\end{proof}

\begin{lemma}\label{lemma3} Under the above assumptions, there is an isomorphism
 \[
\tilde\theta\colon \widetilde{\SL}(W_{E,S})\lra \widetilde{\SL}(W_E)\times\widetilde{\SL}(W_S).
\]
\end{lemma}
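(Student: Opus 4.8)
The plan is to construct the isomorphism $\tilde\theta$ explicitly at the level of the identification $\widetilde{\SL}(W)\cong\{v\in A^{-1}\IZ^n/\IZ^n\mid \sum v_i\in\IZ\}/\langle q\rangle$ recalled in Section~\ref{ss:BHCR}, using the block-diagonal structure of $A_{E,S}=\diag(\hat A_E,\hat A_S)$ established just above. Because the matrix of exponents splits as a direct sum, its inverse does too, so an element $v\in A_{E,S}^{-1}\IZ^{5}$ naturally decomposes as a pair $(v^E,v^S)$ with $v^E\in\hat A_E^{-1}\IZ^2$ and $v^S\in\hat A_S^{-1}\IZ^3$. I would first define the candidate map $\theta$ on $\Aut(W_{E,S})$ by sending the diagonal automorphism with exponent vector $(v^E,v^S)$ to the pair of diagonal automorphisms with exponents $v^E$ and $v^S$ on the $E$- and $S$-factors respectively, and then show it descends to the tilded groups.

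\textbf{The key steps in order.}
First I would verify that $\theta$ lands in $\Aut(W_E)\times\Aut(W_S)$: since $f(x_1,x_2)$ and $g(y_1,y_2,y_3)$ are the variable parts of $W_E$ and $W_S$, invariance of $W_{E,S}=f-g$ under a diagonal action is equivalent to simultaneous invariance of $f$ and of $g$, which is exactly the statement that $v^E$ preserves $f$ and $v^S$ preserves $g$. However, the automorphism groups $\Aut(W_E)$ and $\Aut(W_S)$ act on the full weighted projective spaces $\IP(u_0,u_1,u_2)$ and $\IP(v_0,v_1,v_2,v_3)$, including the variable $x_0$ (resp.\ $y_0$); so I must supply the missing first coordinates. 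The natural choice is to use that for a Delsarte polynomial of the form $x_0^2+f$, invariance of $x_0^2$ forces the $x_0$-exponent to lie in $\tfrac12\IZ/\IZ$, and to fix that coordinate so that the resulting element lies in $\SL(W_E)$ (resp.\ $\SL(W_S)$). Concretely, given $v^E$ with $\sum_i v_i^E\in\tfrac12\IZ$ one sets the $x_0$-exponent to make the total determinant-sum integral; this is where the special form $W=y_0^2+g$ is essential.

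\textbf{Checking the determinant and quotient conditions.}
Next I would check that the SL-condition is compatible: $\sum$ of all five coordinates of $(v^E,v^S)$ being in $\IZ$ should correspond, under the completed map, to each factor lying in $\SL(W_E)$ and $\SL(W_S)$. Here the Calabi--Yau conditions $u_1+u_2=u_0$ and $v_1+v_2+v_3=v_0$ (the $\ell=2$ case of Lemma~\ref{qs}) enter, since they pin down how the determinant on the threefold relates to the two surface/curve determinants. I would then verify that $\theta$ sends the generator $j_{W_{E,S}}$ of $J_{W_{E,S}}$, i.e.\ the charge vector $q=(q^E,q^S)$, to the pair $(j_{W_E},j_{W_S})$ of the two factor $J$-generators, so that $\theta$ descends to a well-defined homomorphism $\tilde\theta$ between the quotients by the $\langle q\rangle$ subgroups. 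Injectivity is immediate from the block decomposition (the two coordinate blocks are independent), and surjectivity follows by reversing the construction: any pair in $\widetilde{\SL}(W_E)\times\widetilde{\SL}(W_S)$ lifts to exponent vectors whose variable parts assemble to an element of $\widetilde{\SL}(W_{E,S})$.

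\textbf{The main obstacle.}
The delicate point is not the algebra of block-diagonal matrices but the bookkeeping of the \emph{deleted} $x_0$ and $y_0$ coordinates: passing from $\widetilde{\SL}(W_{E,S})$, where there is no $x_0$ or $y_0$ variable, to $\widetilde{\SL}(W_E)\times\widetilde{\SL}(W_S)$, where these variables are present, requires reconstructing those missing exponents canonically and showing the reconstruction is consistent with quotienting by the respective $J$-subgroups. In particular I must confirm that the reconstructed $x_0$- and $y_0$-exponents are well defined only modulo the charge generators $q_E$ and $q_S$ (reflecting the freedom in $J_{W_E}$, $J_{W_S}$), and that this matches exactly the single relation $\langle q\rangle$ being quotiented on the threefold side. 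Verifying that these two quotient operations correspond bijectively — equivalently that $|\widetilde{\SL}(W_{E,S})|=|\widetilde{\SL}(W_E)|\cdot|\widetilde{\SL}(W_S)|$, which one can cross-check using the order formula $|\widetilde{\SL}(W)|=|\det A|/(\deg W\deg W^\top)$ from Section~\ref{ss:BHCR} together with Lemma~\ref{lemma2} — is the step I expect to require the most care.
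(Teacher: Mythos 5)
Your overall strategy --- exploit the block decomposition $A_{E,S}=\diag(\hat A_E,\hat A_S)$, reconstruct the deleted $x_0$- and $y_0$-exponents, and control the quotients by the $J$-subgroups via an order count --- is in spirit the same as the paper's, but as written your construction has a genuine gap at its first step. For $(v^E,v^S)\in\SL(W_{E,S})$ the only available condition is that the \emph{total} sum $\sum_i v^E_i+\sum_j v^S_j$ is an integer; nothing formal forces the two partial sums to lie separately in $\tfrac12\IZ$. Your reconstruction of the missing first coordinates (``given $v^E$ with $\sum_i v_i^E\in\tfrac12\IZ$, set the $x_0$-exponent to make the total integral'') is therefore a priori defined only on the subgroup of those elements whose partial sums are half-integral --- in the paper's notation, $\SL^{\pm 1}(W_{E,S})$. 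The assertion that this subgroup is all of $\SL(W_{E,S})$ is precisely the crux of the paper's proof, and it is not automatic: the paper proves it by the order computation $|\SL^{\pm 1}(W_{E,S})|=\frac{1}{2}|\SL(W_E)|\cdot|\SL(W_S)|=|\det A_{E,S}|/\deg(W_{E,S}^\top)=|\SL(W_{E,S})|$, which combines the formula $|\SL(W)|=|\det A|/\deg(W^\top)$ of Section~\ref{ss:BHCR} with Lemma~\ref{lemma2} (this is where the hypothesis $\gcd(u_0^\top,v_0^\top)=1$ enters). You do cite exactly these two ingredients, but only at the very end and as a ``cross-check'' of the $J$-quotient bookkeeping; in a correct proof they must be invoked to show that your map is defined on the whole group (equivalently, that the bijection you build from the half-integral subgroup onto the product forces that subgroup, by cardinality, to be everything). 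Without that step the argument does not go through.

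A secondary, related omission: your surjectivity claim ``by reversing the construction'' also needs an argument. Dropping the first coordinates of a pair $(\gamma,\delta)\in\SL(W_E)\times\SL(W_S)$ lands in $\SL(W_{E,S})$ only when $\gamma_0=\delta_0$; when $\gamma_0\neq\delta_0$ one must first replace $\gamma$ by $\gamma j_E$ (or $\delta$ by $\delta j_S$), which is harmless only modulo $J_{W_E}\times J_{W_S}$ and works because the first charge of $W_E$ (resp.\ $W_S$) equals $\tfrac12$, so that $j_E$ and $j_S$ have first entry $-1$. This explicit fact is what the paper isolates to prove surjectivity of its auxiliary map $\pi$ (and to build the section $s$, using that $u_0$ or $v_0$ is odd); it is missing from your outline, although your remark that the reconstructed exponents are only well defined modulo the charge generators shows you are circling around it.
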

\begin{proof} As above we identify an element of $\Aut(-)$ with the vector of its diagonal entries.
Let $\SL^{\pm 1}(W_{E,S})$ be the subgroup of $\SL(W_{E,S})$ consisting of elements $(\alpha,\beta)\coloneqq(\alpha_1,\alpha_2,\beta_1,\beta_2,\beta_3)$
such that $\alpha_1\alpha_2=\beta_1\beta_2\beta_3=\pm 1$. Let $\alpha_0=\pm 1$ be such that 
$\alpha_0\alpha_1\alpha_2=1$. Clearly $\tilde\alpha\coloneqq (\alpha_0,\alpha)\in\SL(W_E)$. 
Denoting similarly $\tilde\beta\in\SL(W_S)$ we get an exact sequence
\[
\xymatrix{
1\ar[r] & \SL^{\pm 1}(W_{E,S})\ar[r]^{\iota\qquad} & 
\SL(W_E)\times\SL(W_S)\ar[r]^{\qquad \pi} & \{\pm 1\},
}
\]
where $\iota(\alpha,\beta)=(\tilde \alpha, \tilde \beta)$ 
and $\pi(\gamma,\delta)=\gamma_0\delta_0$, with $\gamma=(\gamma_i)_{i=0,1,2}\in\SL(W_E)$
and $\delta=(\delta_i)_{i=0,1,2,3}\in\SL(W_S)$.
Since $E$ has degree $2u_0$, its first charge is $\frac{1}{2}$, 
so the element $j_E$ has first coordinate $-1$,
and similarly for $S$. This implies that the map~$\pi$ is surjective.
Using the results recalled in Section~\ref{ss:BHCR} and Lemma~\ref{lemma2} we deduce:  
\begin{align*}
|\SL^{\pm 1}(W_{E,S})|&=
\frac{| \SL(W_E)|\cdot | \SL(W_S)|}{2}=
\frac{|\det(\hat A_E)\det(\hat A_S)|}{2u_0^\top v_0^\top}\\
&=
\frac{|\det(A)|}{\deg(W_{E,S}^\top)}=|\SL(W_{E,S})|.
\end{align*}
It follows that $\SL^{\pm 1}(W_{E,S})=\SL(W_{E,S})$. Since $\gcd(u_0,v_0)=1$, either $u_0$ or $v_0$ is odd.
The map $s\colon\{\pm 1\}\to \SL(W_E)\times\SL(W_S)$ defined by $s(-1)=(j_E^{u_0},\id)$ if $u_0$ is odd,
or by $s(-1)=(\id,j_S^{v_0})$ if $v_0$ is odd, is a section
of $\pi$. It induces an isomorphism
 \[
\psi\colon\SL(W_{E,S})\times \{\pm 1\}\to  \SL(W_E)\times\SL(W_S),
\quad ((\alpha,\beta),- 1)\mapsto \iota(\alpha,\beta)\cdot s(-1).
 \]
The generator $j=(j_1,j_2)$ of $J_{W_{E,S}}$ is such that $\tilde\jmath_1=j_E$ and $\tilde \jmath_2=j_S$, hence
 $\psi\left(J_{W_{E,S}}\times \{\pm 1\}\right)=J_{W_E}\times J_{W_S}$ (the inclusion is clear and the groups have
the same order). We thus get an isomorphism
\[
\tilde\theta\colon\widetilde{\SL}(W_{E,S})\to \widetilde{\SL}(W_E)\times\widetilde{\SL}(W_S).
\]
\end{proof}

\begin{remark}\label{rm:det} Since $j_E$ and $j_S$ have determinant $1$ and first entry equal to $-1$,
both $j_1$ and $j_2$ have determinant  equal to $-1$. By multiplying by~$j$, we
can thus assume that any class $[(\alpha,\beta)]\in\widetilde{\SL}(W_{E,S})$ has a representative $(\alpha,\beta)$
such that $\alpha_1\alpha_2=\beta_1\beta_2\beta_3=1$. With these choices the map $\tilde\theta$ is given by:
\[
\tilde\theta [(\alpha,\beta)]= \left([(1,\alpha)],[(1,\beta)]\right).
\]
\end{remark}

Assuming that $\gcd(u_0^\top,v_0^\top)=1$, we denote by 
$$\tilde\theta^\top\colon\widetilde{\SL}(W_{E,S}^\top)\to \widetilde{\SL}(W_E^\top)\times\widetilde{\SL}(W_S^\top)
$$
the map defined similarly as $\tilde\theta$ at the level of the transposed potentials.
For any subgroups $G_E\subset\widetilde{\SL}(W_{E})$ and $G_S\subset\widetilde{\SL}(W_{S})$, we define
$$
G_{E,S}\coloneqq \tilde\theta^{-1}(G_E\times G_S).
$$

\begin{lemma}\label{lemma4} Under the above assumptions we have $\tilde\theta^\top(G_{E,S}^\top)=G_E^\top\times G_S^\top$.
\end{lemma}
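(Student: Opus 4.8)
The plan is to verify the claimed identity $\tilde\theta^\top(G_{E,S}^\top)=G_E^\top\times G_S^\top$ by unwinding both sides through the explicit descriptions of the transposition pairing and the splitting isomorphisms. The key conceptual point is that the bilinear form governing the transposed groups is itself block-diagonal, and that the two splitting isomorphisms $\tilde\theta$ and $\tilde\theta^\top$ are compatible with this block structure. Concretely, recall from Section~\ref{ss:BHCR} that $G_{E,S}^\top$ is the orthogonal complement of $G_{E,S}$ inside $\widetilde{\SL}(W_{E,S}^\top)$ with respect to the pairing $(u,v)\mapsto u^\top A_{E,S}v$, and that by Lemma~\ref{lemma2} we have $A_{E,S}=\diag(\hat A_E,\hat A_S)$, so that $A_{E,S}^\top=\diag(\hat A_E^\top,\hat A_S^\top)$.

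**First I would** fix compatible representatives, using Remark~\ref{rm:det}: every class in $\widetilde{\SL}(W_{E,S})$ has a representative $(\alpha,\beta)$ with $\alpha_1\alpha_2=\beta_1\beta_2\beta_3=1$, under which $\tilde\theta[(\alpha,\beta)]=([(1,\alpha)],[(1,\beta)])$, and likewise $\tilde\theta^\top[(u,v)]=([(1,u)],[(1,v)])$ for the transposed potentials. With these normalized representatives the pairing splits: for $(u,v)\in\widetilde{\SL}(W_{E,S}^\top)$ and $(\alpha,\beta)\in\widetilde{\SL}(W_{E,S})$,
\[
(u,v)^\top A_{E,S}(\alpha,\beta)=u^\top\hat A_E\,\alpha+v^\top\hat A_S\,\beta\pmod{\IZ}.
\]
The crucial observation to check is that the right-hand summands $u^\top\hat A_E\,\alpha$ and $v^\top\hat A_S\,\beta$ are precisely the pairings on the elliptic-curve and K3 factors appearing in the definitions of $G_E^\top$ and $G_S^\top$; this amounts to confirming that deleting the zeroth row and column from $A_E$ (to form $\hat A_E$) is compatible with the normalization $\alpha_1\alpha_2=1$, $u_1u_2=1$ on both sides, so that the zeroth coordinate contributes nothing to the pairing.

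**The main step** is then purely set-theoretic. Using $G_{E,S}=\tilde\theta^{-1}(G_E\times G_S)$, an element $(u,v)$ lies in $G_{E,S}^\top$ if and only if $u^\top\hat A_E\,\alpha+v^\top\hat A_S\,\beta=0$ in $\IQ/\IZ$ for all $(\alpha,\beta)$ with $[(1,\alpha)]\in G_E$ and $[(1,\beta)]\in G_S$. Since $G_E$ and $G_S$ range independently (the preimage under $\tilde\theta$ is a full product), one may take $\beta$ trivial to force $u^\top\hat A_E\,\alpha=0$ for all $\alpha$ with class in $G_E$, and symmetrically force $v^\top\hat A_S\,\beta=0$ for all $\beta$ with class in $G_S$. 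This yields $[(1,u)]\in G_E^\top$ and $[(1,v)]\in G_S^\top$, i.e. $\tilde\theta^\top[(u,v)]\in G_E^\top\times G_S^\top$. The reverse inclusion is immediate by running the same computation backwards. Because $\tilde\theta^\top$ is an isomorphism (Lemma~\ref{lemma3} applied to the transposed potentials, legitimate since $\gcd(u_0^\top,v_0^\top)=1$), the equality of the two subgroups follows.

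**The step I expect to be the genuine obstacle** is the bookkeeping in the second paragraph: one must carefully track the passage between the $5\times 5$ matrix $A_{E,S}$ with its normalized representatives and the separate factor pairings defined via $\hat A_E$ and $\hat A_S$, making sure that the independent generator $j$ (whose first entry is $-1$) has been correctly quotiented out on both the source and transposed sides so that the zeroth-coordinate data really does decouple. Once the pairing is seen to be block-diagonal in these normalized coordinates, the orthogonal-complement argument is formal; but the normalization must be handled with care to avoid a spurious contribution from the involution coordinate $x_0$ (respectively $y_0$).
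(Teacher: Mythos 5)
Your proof is correct and takes essentially the same route as the paper's: both identify the groups with their additive descriptions, exploit the block-diagonal form $A_{E,S}=\diag\bigl(\hat A_E,\hat A_S\bigr)$ together with the normalized representatives of Remark~\ref{rm:det} to split the pairing, and use the product structure $G_{E,S}=\tilde\theta^{-1}(G_E\times G_S)$ to separate the orthogonality condition into its elliptic and K3 components. Your explicit step of specializing to elements with trivial second (resp.\ first) factor is precisely what the paper's ``if and only if'' assertion uses implicitly, so the two arguments coincide.
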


\begin{proof}
Using the results of Section~\ref{ss:BHCR}, we identify $\widetilde{\SL}(W_{E,S})$ 
with the subgroup of the quotient of $A_{E,S}^{-1}\IZ^5/\IZ^5$ by $\langle q\rangle$ 
consisting of those vectors $u$ such that $\sum_i u_i \in \IZ$.  
We do a similar identification for the groups $\widetilde \SL(W_E)$, $\widetilde \SL(W_S)$ and the corresponding groups  
for the transposed polynomials.
With these identifications we have
\[
 G_{E,S}^\top=\{u\in \widetilde{\SL}(W_{E,S}^\top)\,|\, u^\top A_{E,S}v=0\quad \forall v\in  G_{E,S}\}.
\]
By Remark \ref{rm:det}  any element  $v\in  G_{E,S}$ can be represented
as the diagonal join of two diagonal matrices of determinant one, 
explicitly  $v=(\gamma,\delta)$ and:
\[
\tilde\theta(v)=((0,\gamma_1,\gamma_2),(0,\delta_1,\delta_2,\delta_3)), 
\]
where $\tilde \gamma=(0,\gamma_1,\gamma_2)\in   G_E$ and $\tilde\delta=(0,\delta_1,\delta_2,\delta_3)\in  G_S$ 
since $G_{E,S}=\tilde\theta^{-1}( G_E\times  G_S)$. 
Thus $u=(\alpha,\beta)\in G_{E,S}^\top$ if and only if 
$\alpha^\top\hat A_E\gamma=0$ and  $\beta^\top\hat A_S\delta=0$,
\ie $\tilde \alpha\in G_E^\top$ and $\tilde \beta\in G_S^\top$.
This proves the statement.
\end{proof}

\subsection{Main result}\label{ss:main}

The main result of this paper is a construction of Borcea--Voisin mirror pairs by means of the BHCR transposition rule. For this, we start from
an elliptic curve~$E$ and a K3 surface~$S$, under the above assumptions, and we consider subgroups $G_E\subset\widetilde{\SL}(W_{E})$ and $G_S\subset\widetilde{\SL}(W_{S})$. Denoting $G_{E,S}\coloneqq\tilde\theta^{-1}(G_E\times  G_S)$, we know
from Lemma~\ref{ell} and Theorem~\ref{th:K3} that the Borcea--Voisin  Calabi--Yau threefolds
$$
Z_{E/G_E,S_{G_S}} \quad\text{and}\quad Z_{E^\top/G_E^\top,S^\top_{G^\top_S}}
$$
form a mirror pair. We also know from Theorem~\ref{th:bhcr} that the BHCR Calabi--Yau orbifolds
$$
\left[X_{W_{E,S}}/ G_{E,S}\right] \quad\text{and} \quad \left[X_{W^\top_{E,S}}/G_{E,S}^\top\right]
$$
form a mirror pair.

\begin{theorem}\label{th:main} Under the above assumptions, $Z_{E/G_E,S_{G_S}}$ is birational to $X_{W_{E,S}}/G_{E,S}$ and $Z_{E^\top/G_E^\top,S^\top_{G^\top_S}}$ is birational to $X_{W^\top_{E,S}}/G_{E,S}^\top$.
\end{theorem}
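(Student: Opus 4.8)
The plan is to prove the two birational equivalences separately, since the second one is simply the first applied to the transposed data. I would focus on establishing
\[
Z_{E/G_E,S_{G_S}}\ \text{birational to}\ X_{W_{E,S}}/G_{E,S},
\]
and then remark that the identical argument, run with $W_E^\top$, $W_S^\top$, $G_E^\top$, $G_S^\top$ in place of $W_E$, $W_S$, $G_E$, $G_S$, yields the statement for the transposed mirrors (this uses that $W_{E^\top,S^\top}=W_{E,S}^\top$ by Lemma~\ref{lemma2} and that $\tilde\theta^\top(G_{E,S}^\top)=G_E^\top\times G_S^\top$ by Lemma~\ref{lemma4}, so the transposed objects are organized exactly as the original ones).

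First I would unwind the definitions. By Lemma~\ref{ell} we have $E/\widetilde{\SL}(W_E)\cong E$, and more generally $E/G_E$ is the elliptic curve obtained as the appropriate quotient; by Theorem~\ref{th:K3} the surface $S_{G_S}$ is the minimal resolution of $X_{W_S}/G_S$ carrying its induced non-symplectic involution. The Borcea--Voisin threefold $Z_{E/G_E,S_{G_S}}$ is then a crepant resolution of $(E/G_E\times S_{G_S})/\langle\sigma_E\times\sigma_S\rangle$. On the other side, Proposition~\ref{prop:model_twist} (with $\ell=2$, justified by Lemma~\ref{qs}) identifies the twist-map image $X_{W_{E,S}}$ birationally with $(X_{W_E}\times X_{W_S})/\mu_2$, where $\mu_2$ acts by simultaneously negating $x_0$ and $y_0$; this is exactly the diagonal action of $\sigma_E\times\sigma_S$ at the level of the defining equations.

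The key step is to quotient this birational equivalence by the groups. I would show that the twist map $\Phi$ is equivariant for the action of $G_{E,S}=\tilde\theta^{-1}(G_E\times G_S)$ on the source and the induced $G_{E,S}$-action on the target $X_{W_{E,S}}$. Here Remark~\ref{rm:det} is what makes things clean: every class in $\widetilde{\SL}(W_{E,S})$ has a representative $(\alpha,\beta)$ with $\alpha_1\alpha_2=\beta_1\beta_2\beta_3=1$, so under $\tilde\theta$ it corresponds to a pair $([(1,\alpha)],[(1,\beta)])$ acting on the $x$-variables through $G_E\subset\widetilde{\SL}(W_E)$ and on the $y$-variables through $G_S\subset\widetilde{\SL}(W_S)$, without touching $x_0$ or $y_0$. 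Consequently the $G_{E,S}$-action descends separately to $E/G_E$ and to $X_{W_S}/G_S$ and commutes with the $\mu_2$-involution. Passing to quotients, the birational equivalence $X_{W_{E,S}}\ \text{birational to}\ (E\times X_{W_S})/\mu_2$ becomes a birational equivalence
\[
X_{W_{E,S}}/G_{E,S}\ \text{birational to}\ \bigl((E/G_E)\times(X_{W_S}/G_S)\bigr)/\mu_2.
\]
Since a crepant resolution of the right-hand side is by definition $Z_{E/G_E,S_{G_S}}$, and birational Calabi--Yau varieties with such quotient singularities admit a common crepant resolution, this gives the desired equivalence.

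The main obstacle I anticipate is the bookkeeping around the group actions and the weighted-projective quotient structure: one must check that the factorization $\tilde\theta$ of Lemma~\ref{lemma3} genuinely matches the \emph{geometric} diagonal action $\sigma_E\times\sigma_S$ appearing in the Borcea--Voisin construction, including that the subgroup $J_{W_{E,S}}$ quotiented out on the BHCR side corresponds precisely to the trivial action on $\IP(v_0u,u_0v)$ and hence does not alter the geometry. A secondary subtlety is the interaction between taking the $G_S$-quotient and taking the \emph{minimal resolution} $S_{G_S}$: one should verify that forming $Z$ via resolution-then-quotient agrees birationally with quotient-then-resolution, which is where the crepancy and the insensitivity of Hodge numbers to the chosen resolution (cited in Section~\ref{s:BV}) are used. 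Once the equivariance and these compatibilities are in place, the conclusion follows formally.
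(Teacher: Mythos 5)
Your proposal is correct and follows essentially the same route as the paper's proof: the twist map of Propositions~\ref{prop:model_twist} and~\ref{prop:model_borcea}, equivariance via Remark~\ref{rm:det}, passage to the quotient by $G_{E,S}$, the comparison of $S/G_S$ with its minimal resolution $S_{G_S}$, and the reduction of the transposed statement to Lemmas~\ref{lemma2} and~\ref{lemma4}. The only cosmetic difference is that the paper settles your ``resolution-then-quotient versus quotient-then-resolution'' subtlety simply by composing the birational morphisms $Z_{E/G_E,S_{G_S}}\to Y_{E/G_E,S_{G_S}}\to (E/G_E\times S/G_S)/\langle\sigma_E\times\sigma_S\rangle$ induced by the (equivariant) resolutions, so no appeal to crepancy or to the invariance of Hodge numbers is actually needed at that step.
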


\begin{proof}
By assumption $E$ and $S$ are defined by Delsarte type, invertible and non-degenerate polynomials  
\[
 E\coloneqq\{x_0^2+f(x_1,x_2)=0\}\subset \IP(u_0,u),
\quad S\coloneqq\{y_0^2+g(y_1,y_2,y_3)=0\}\subset \IP(v_0,v)
\]
inside normalized weighted projective spaces and
we assume that $\gcd(u_0,v_0)=1$ and $\gcd(u_0^\top,v_0^\top)=1$.
By Propositions~\ref{prop:model_twist} \& \ref{prop:model_borcea} the twist map induces a birational map
$$
\xymatrix{\Phi\colon Y_{E,S}=\frac{E\times S}{\langle \sigma_E\times \sigma_S\rangle}\ar@{-->}^-{\sim}[r]&X_{W_{E,S}}}.
$$
Using Remarks~\ref{rm:nat} \& \ref{rm:det} we see that $G_E\times G_S$ commutes with $\sigma_E\times \sigma_S$ and that~$\Phi$ is equivariant for the action of $G_E\times G_S$ on the source and of $G_{E,S}$ on the target, hence it induces
a birational equivalence
$$
\bar\Phi\colon (E/G_E\times S/G_S)/\langle \sigma_E\times \sigma_S\rangle\to X_{W_{E,S}}/G_{E,S}.
$$
Recall  that $S_{G_S}$ denotes the minimal resolution of $S/G_S$ (see end of Section~\ref{ss:BHCR}), which
still admits a non-symplectic involution, hence we have birational morphisms induced by resolutions of singularities:
$$
Z_{E/G_E,S_{G_S}}\rightarrow Y_{E/G_E,S_{G_S}}\rightarrow (E/G_E\times S/G_S)/\langle\sigma_E\times \sigma_S\rangle.
$$
This proves the first statement. Similarly, using Lemmas~\ref{lemma2} \& \ref{lemma4} we get that the twist map induces a birational map
$$
(E^\top/G_E^\top\times S^\top/G_S^\top)/\langle\sigma_{E^\top}\times \sigma_{S^\top}\rangle \to X_{W_{E^\top,S^\top}}/G_{E,S}^\top= X_{W^\top_{E,S}}/G_{E,S}^\top 
$$
and we deduce the second statement.
\end{proof}

Since birational Calabi--Yau threefolds have the same Hodge numbers (see~\cite{batyrev2}), 
the theorem implies that the two mirror constructions studied
in this paper (the construction of Borcea--Voisin and the BHCR mirror construction) 
provide the same mirror pairs as soon as both are defined. 
The situation is represented in  the following diagram, 
where for simplicity we consider only the case
$G_E=G_S=\{\id \}$. 
We denote by $Z$  the crepant resolution of $E\times S/\langle\sigma_E\times \sigma_S\rangle$. 
Similarly 
we denote by  $Z'$ the crepant resolution of $(E^\top/\widetilde\SL(W_E^\top)\times S^\top/\widetilde\SL(W_S^\top))/\langle \sigma_{E^\top}\times\sigma_{S^\top}\rangle$.

$$
\xymatrixcolsep{9pc}
\xymatrix{
Z\ar[d]_{crepant}\ar@{<~>}[r]^{BV~mirror}&Z'\ar[d]^{crepant}\\
\frac{E\times S}{\langle\sigma_E\times \sigma_S\rangle}\ar@{.>}[d]_{birational}&\frac{E^\top/\widetilde\SL(W_E^\top)\times S^\top/\widetilde\SL(W_S^\top)}{\langle \sigma_{E^\top}\times\sigma_{S^\top}\rangle}\ar@{.>}[d]^{birational}\\
X_{W_{E,S}}\ar@{<~>}[r]^{BHCR~mirror}&X_{W_{E^\top,S^\top}}/\widetilde\SL(W_{E,S}^\top)
}
$$

\vspace{0.2cm}

\begin{example} Consider the elliptic curve and the K3 surface defined by
\[
E\coloneqq\{x_0^2+x_1^4+x_2^4=0\}\subset \IP(2,1,1),
\quad S\coloneqq\{y_0^2+y_1^5y_2+y_2^5y_3+y_3^6=0\}\subset \IP(3,1,1,1).
\]
Observe that $S$ is the double cover of $\IP^2$ 
branched along a smooth sextic curve and that the involution 
$\sigma_S\colon y_0\mapsto -y_0$ 
is the covering involution. 
Since the fixed locus of $\sigma_S$ is a smooth curve of genus $10$, 
the invariants of its fixed lattice are (see Section~\ref{ss:lattice_mirror}) $(r,a,\delta)=(1,1,1)$,
\ie $S\in \mathcal \cK_{L(1,1,1)}$. It follows that $Z_{E,S}\in\cZ_{1,1,1}$ and that 
its Hodge numbers are $h^{1,1}=6$,
 $h^{2,1}=60$ (see Section~\ref{s:BV}).
By Proposition \ref{prop:model_borcea} a birational projective model for the Borcea--Voisin 
Calabi--Yau threefold $Z_{E,S}$ is 
\[
X_{W_{E,S}}=\{x_1^4+x_2^4-y_1^5y_2-y_2^5y_3-y_3^6=0\}\subset \IP(3,3,2,2,2).
\]
Clearly $E=E^\top$ and
\[
S^\top=\{y_0^2+y_1^5+y_1y_2^5+y_2y_3^6=0\}\subset \IP(25,10,8,7).
\]
The group $\widetilde{\SL}(W_{E^\top})$ has order $2$ and it is generated 
by the involution:
$$
\iota\colon(x_0,x_1,x_2)\mapsto (-x_0,x_1,-x_2),
$$ 
while an easy computation shows that $\widetilde{\SL}(W_{S^\top})$ is trivial.
Thus the BHCR mirror of $E$ is $E/\langle\iota\rangle$ and that of $S$ is $S^\top$.
By Theorem~\ref{th:K3} we find that the invariants of the fixed lattice 
of the involution $\sigma_{S^\top}$ 
are $(r,a,\delta)=(19,1,1)$, \ie $S^{\top}\in \cK_{L(19,1,1)}$.
Thus $Z_{E/\langle\iota\rangle,S^\top}\in\cZ_{19,1,1}$ and its Hodge numbers are $h^{1,1}=60$, $h^{2,1}=6$.
The transposed potential of $W_{E,S}$ is
\[
W_{E,S}^{\top}=\{x_1^4+x_2^4-y_1^5-y_1y_2^5-y_2y_3^6=0\}\subset \IP(25,25,20,16,14).
\]
On the other hand, the BHCR mirror of $X_{W_{E,S}}$ is the quotient of 
$X_{W_{E,S}^{\top}}$ by the action of the group $\widetilde{\SL}(W_{E,S}^{\top})$,
which is generated by the involution 
$$
\nu\colon (x_1,x_2,y_1,y_2,y_3)\mapsto (\ii x_1,-\ii x_2,y_1,y_2,y_3).
$$
Through the isomorphism $\tilde\theta$ of Lemma~\ref{lemma3}, the element $\nu$ corresponds to  $(\iota,\id)$
since $(-x_0,x_1,-x_2)=(x_0,\ii x_1,-\ii x_2)$ in $\IP(2,1,1)$.
As in the proof of Theorem~\ref{th:main} 
we get that $Z_{E/\langle\iota\rangle,S^\top}$ is birational to
$X_{W_{E,S}^{\top}}/\langle\nu\rangle$. 
\end{example}

\begin{remark}
Under the above assumptions, taking a subgroup $G\subset\widetilde{\SL}(W_{E,S})$ such that $\tilde\theta(G)$ is
not a direct product, one gets more examples of Calabi--Yau mirror manifolds. 
The twist map $\Phi\colon Y_{E,S}\dasharrow X_{W_{E,S}}$ is
equivariant for the action of $\tilde\theta(G)$ on the source and of $G$ on the target, hence $Y_{E,S}/\tilde\theta(G)$
is birational to $X_{W_{E,S}}/G$. By Lemma~~\ref{lemma4} the group $\tilde\theta^\top(G^\top)$ is not a direct product. By the theorem of Bridgeland--King--Reid~\cite[Theorem~1.2]{BKR}, the Nakamura Hilbert scheme of $\tilde\theta(G)\times \langle\sigma_E\times \sigma_S\rangle$-regular orbits of $E\times S$ is a crepant resolution of $Y_{E,S}/\tilde\theta(G)$, 
hence it is a smooth birational model of the Calabi--Yau orbifold $[X_{W_{E,S}}/G]$. 
Applying the BHCR mirror symmetry we deduce that the Nakamura 
Hilbert scheme of $\tilde\theta^\top(G^\top)\times \langle\sigma_E^\top\times\sigma_S^\top\rangle$-regular orbits of $E^\top\times S^\top$ is a smooth birational model of the mirror Calabi--Yau orbifold $[X_{W_{E,S}^\top}/G^\top]$.
\end{remark}

\section{Borcea--Voisin families with BHCR models}

Looking at the classification of Nikulin~\cite{nikulinfactor}, 
one finds $63$ families of K3 surfaces with a non-symplectic 
involution that admit a lattice mirror. 
For  any such family, the Borcea--Voisin mirror construction is defined. 
Each of these families is determined by a triple $(r,a,\delta)$ and is represented 
in Figure \ref{tree} by either a dot or a star which does not lie 
on the right hand side of the triangle and is distinct from $(r,a,\delta)=(14,6,0)$.
By results of \cite{ABS} exactly $29$ of these families contain Delsarte type models.
In \cite{ABS} it has been proved that for them the  
lattice mirror symmetry for K3 surfaces
is equivalent to the BHCR mirror symmetry.

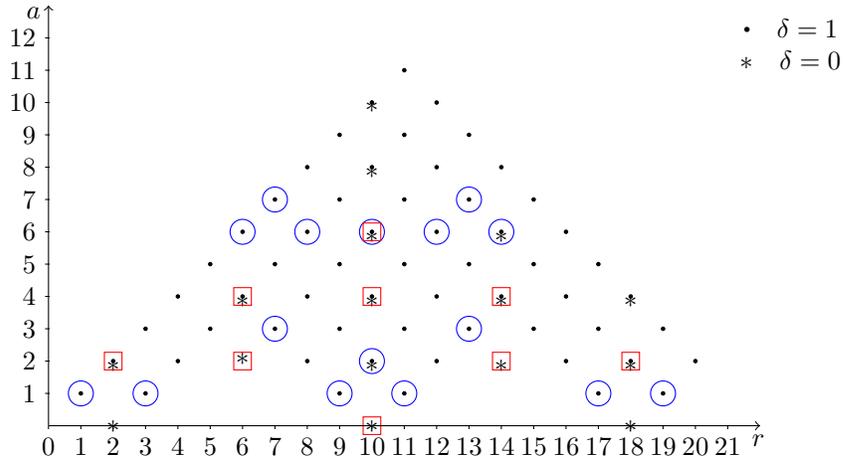
\begin{figure}[!ht]
\hfill $\begin{array}{l} \sbt \quad \delta=1\\ 
\ast \quad \delta=0\end{array}$
\vspace{-1.1cm}

\begin{tikzpicture}[scale=.43]
\filldraw [black] 
(1,1) circle (1.5pt)  node[draw,circle,blue](10){}
(2,0) node[below=-0.20cm]{*} 
(2,2) circle (1.5pt) node[below=-0.15cm]{*} node[draw,red](10){}
 (3,1) circle (1.5pt) node[draw,circle,blue](10){}
 (3,3) circle (1.5pt)
 (4,2) circle (1.5pt)
(4,4) circle (1.5pt)
(5,3) circle (1.5pt)
(5,5) circle (1.5pt)
(6,4) circle (1.5pt)node[below=-0.15cm]{*} node[draw,red](10){}
(6,2)    node[below=-0.23cm]{*} node[draw,red](10){}
(6,6) circle (1.5pt) node[draw,circle,blue](10){}
(7,3) circle (1.5pt) node[draw,circle,blue](10){}
(7,5) circle (1.5pt)
(7,7) circle (1.5pt) node[draw,circle,blue](10){}
(8,2) circle (1.5pt)
(8,4) circle (1.5pt)
(8,6) circle (1.5pt) node[draw,circle,blue](10){}
(8,8) circle (1.5pt)
(9,1) circle (1.5pt) node[draw,circle,blue](10){}
(9,3) circle (1.5pt)
(9,5) circle (1.5pt)
(9,7) circle (1.5pt)
(9,9) circle (1.5pt)
(10,0)  node[below=-0.20cm]{*} node[draw,red](10){}
(10,2) circle (1.5pt)node[below=-0.15cm]{*} node[draw,circle,blue](10){}
(10,4) circle (1.5pt)node[below=-0.15cm]{*} node[draw,red](10){}
(10,6) circle (1.5pt)node[below=-0.15cm]{*} node[draw,circle,blue](10){} node[draw,red](10){}
(10,8) circle (1.5pt)node[below=-0.15cm]{*}
(10,10) circle (1.5pt)node[below=-0.15cm]{*}
(11,1) circle (1.5pt) node[draw,circle,blue](10){}
(11,3) circle (1.5pt)
(11,5) circle (1.5pt)
(11,7) circle (1.5pt)
(11,9) circle (1.5pt)
(11,11) circle (1.5pt) 
(12,2) circle (1.5pt)
(12,4) circle (1.5pt)
(12,6) circle (1.5pt) node[draw,circle,blue](10){}
(12,8) circle (1.5pt)
(12,10) circle (1.5pt) 
(13,3) circle (1.5pt) node[draw,circle,blue](10){}
(13,5) circle (1.5pt)
(13,7) circle (1.5pt) node[draw,circle,blue](10){}
(13,9) circle (1.5pt) 
(14,2)  node[below=-0.15cm]{*} node[draw,red](10){}
(14,4) circle (1.5pt)node[below=-0.15cm]{*} node[draw,red](10){}
(14,6) circle (1.5pt)node[below=-0.15cm]{*} node[draw,circle,blue](10){}
(14,8) circle (1.5pt) 
(15,3) circle (1.5pt)
(15,5) circle (1.5pt)
(15,7) circle (1.5pt)
(16,2) circle (1.5pt)
(16,4) circle (1.5pt)
(16,6) circle (1.5pt) 
(17,1) circle (1.5pt) node[draw,circle,blue](10){}
(17,3) circle (1.5pt)
(17,5) circle (1.5pt)
(18,0)  node[below=-0.2cm]{*}
(18,2) circle (1.5pt)node[below=-0.15cm]{*} node[draw,red](10){}
(18,4) circle (1.5pt) node[below=-0.15cm]{*}
(19,1) circle (1.5pt) node[draw,circle,blue](10){}
(19,3) circle (1.5pt) 
(20,2) circle (1.5pt) 
 ; 
\draw[->] (0,0) -- coordinate (x axis mid) (22,0);
    \draw[->] (0,0) -- coordinate (y axis mid)(0,13);
    \foreach \x in {0,1,2,3,4,5,6,7,8,9,10,11,12,13,14,15,16,17,18,19,20,21}
        \draw [xshift=0cm](\x cm,0pt) -- (\x cm,-3pt)
         node[anchor=north] {$\x$};
          \foreach \y in {1,2,3,4,5,6,7,8,9,10,11,12}
        \draw (1pt,\y cm) -- (-3pt,\y cm) node[anchor=east] {$\y$};
    \node[below=0.2cm, right=4.5cm] at (x axis mid) {$r$};
   \node[left=0.2cm, below=-2.9cm] at (y axis mid) {$a$};
  \end{tikzpicture} 
  \caption{Triples $(r,a,\delta)$ where Borcea--Voisin  and BHCR mirrors are equivalent} \label{tree}
\end{figure}
 
 Looking at the classification tables \cite[\S 7]{ABS} we find that
Theorem \ref{th:main} can be applied to $25$ of these families. 
In Figure \ref{tree} the $25$ triples $(r,a,\delta)$ are marked with 
either a circle (when $\delta=1$) or a square (when $\delta=0$). 
The four bad triples are $(2,0,0)$, $(18,0,0)$, $(4,4,1)$, $(16,4,1)$.
In these cases the Delsarte models of the K3 surface $S$ all lie 
in a weighted projective space $\IP(v_0,v_1,v_2,v_3)$ 
where $6$ divides $v_0$, thus Proposition~\ref{prop:model_borcea} 
does not apply.\\

\begin{center}
{\bf Appendix: BHCR mirror pairs  for elliptic curves}\\
\vspace{3mm}
{\rm{\small{\sc in collaboration with Sara A. Filippini}}}
\end{center}

\vskip 10pt

Using similar techniques as in the paper \cite{ABS} one can easily find a table of equations~$W$  of elliptic curves $E$ in weighted three dimensional projective spaces $\IP(w_0,w_1,w_2)$, that are defined by non-degenerate, invertible potentials. The possible weights are $(1,1,1)$, $(2,1,1)$ and $(3,2,1)$, so $|J_W|=3, 4$ and $6$ respectively. We give below the table with the elliptic curves and their BHCR mirrors (the number of the mirror is given in the parenthesis).
The equations marked with a $\clubsuit$ are those that we use in the paper. 

\begin{table}[ht!]
$$
\begin{array}{r|l|c|c|c}
\mbox{No.} &(w_0,w_1,w_2)&W&|\SL(W)|&|\SL(W)/J_W|\\
\hline
(1)1& (1,1,1)&x_0^3+x_1^3+x_2^3&9&3\\
(7)2&(1,1,1)&x_0^2x_1+x_1^2x_2+x_2^3&3&1\\
(3)3&(1,1,1)&x_0^2x_1+x_1^2x_2+x_2^2x_0&3&1\\ 
(13)4&(1,1,1)&x_0^3+x_1^2x_2+x_2^3&3&1\\
(5)5&(1,1,1)&x_0^3+x_1^2x_2+x_2^2x_1&3&1\\
\hline
(6)6& (2,1,1)~\clubsuit  & x_0^2+x_1^4+x_2^4&8&2\\
(2)7&(2,1,1)&x_0^2+x_0x_1^2+x_1x_2^3&4&1\\
(12)8&(2,1,1) ~\clubsuit & x_0^2+x_1^3x_2+x_2^4&4&1\\ 
(9)9&(2,1,1)&x_0^2+x_0x_1^2+x_2^4&4&1\\
(10)10&(2,1,1)~\clubsuit &x_0^2+x_1^3x_2+x_2^3x_1&4&1\\
\hline
(11)11&(3,2,1)~\clubsuit &x_0^2+x_1^3+x_2^6&6&1\\ 
(8)12&(3,2,1)~\clubsuit  &x_0^2+x_1^3+x_1x_2^4&6&1\\
(4)13&(3,2,1)&x_0^2+x_0x_2^3+x_1^3&6&1\\
\end{array}
$$ 
\vspace{0.2cm}

\caption{The BHCR mirror pairs of elliptic curves}
\end{table}
In case 1 the group $\SL(W)/J_W$ is generated by the automorphism $$\phi\colon (x_0, x_1, x_2)\mapsto (\zeta^2 x_0, \zeta x_1, \zeta x_2)$$ with $\zeta$ a primitive third root of the unity. 
It is easy to see that the quotient $E/\langle \phi \rangle$ is an elliptic curve with equation
of type 5 in the table.  Similarly, in case 6, the group $\SL(W)/J_W$ is generated 
by the automorphism $$\iota\colon (x_0, x_1, x_2)\mapsto (-x_0, x_1, -x_2).$$ 
The quotient $E/\langle \iota \rangle$ is an elliptic curve with equation
of type 2 in the table. 
Observe that in this last case $E$ has a complex multiplication of order four given by
 \[
 \mu: (x_0,x_1,x_2)\mapsto (x_0, \ii x_1,x_2).
 \]
Since $\mu$ commutes with $\iota$, it induces complex multiplication of order four on 
the quotient $E/\langle \iota \rangle$, that is thus isomorphic to $E$.

\vskip 30pt

\noindent
{\small {\sc Sara A. Filippini}, {\sc Institut f\"ur Mathematik
Universit\"at Z\"urich,
Winterthurerstrasse 190
8057 Z\"urich, Switzerland},  {\it E-mail address}: {\tt saraangela.filippini@math.uzh.ch}}
\vskip 30pt

\bibliographystyle{amsplain}
\bibliography{Biblio}

\end{document}